\newtheorem{thm}{Theorem}
\newtheorem{theorem}{Theorem}[section]
\newtheorem{cor}{Corollary}[section]
\newtheorem{lem}[cor]{Lemma}
\newtheorem{prop}{Proposition}[section]
\theoremstyle{definition}
\newtheorem*{theorem*}{Theorem}
\newtheorem*{remark}{Remark}
\numberwithin{equation}{section}
\newcommand{\R}{\mathbb R}
\newcommand{\N}{\mathbb N}
\newcommand{\bigint}{\begin{picture}(10,10)
\put(-1,2){\line(1,0){10}}
\end{picture}\kern-14pt\int}
\begin{document}

\title{On the Dirichlet Problem for solutions of a restricted nonlinear mean value property }
\author{ANGEL ARROYO and  JOS\'{E} G. LLORENTE }
\date{Departament de Matem\`{a}tiques \\ Universitat Aut\`onoma de Barcelona \\ 08193 Bellaterra. Barcelona \\SPAIN \\
arroyo@mat.uab.cat \\jgllorente@mat.uab.cat }

\maketitle

\footnotetext{Partially supported by grants MTM2010-16232 and 2014
SGR 75.}

\begin{abstract}

We give an analytic proof of the solution of Dirichlet Problem for
continous functions satisfying a nonlinear mean value problem
related to  the $p$-laplace operator and certain stochastic games.

 \emph{Key
words:} mean value property, $p$-harmonious, dynamic programming
principle, Dirichlet problem. MSC2010: 31C05, 35B60, 31C45.
\end{abstract}


\section{Introduction}

Given a bounded domain $\Omega \subset \R^d$ and a function $r:
\Omega \to (0, +\infty )$, we say that $r$ is an \textbf{admissible
radius function} in $\Omega$ if
$$
0 < r(x) \leq dist(x, \partial \Omega )
$$
for all $x\in \Omega$. In this paper we will be mainly interested in
admissible radius functions satisfying  the lipschitz condition
\begin{equation}
|r(x) - r(y)| \leq |x-y| \, \, \, \, \, \, \text{for} \, \, x,y\in
\Omega
\end{equation}
When no confusion arises, we will also use the notation $r_x$
instead of $r(x)$ and $B_x$ instead of $B(x, r(x))$, the (open) ball
centered at $x$ of radius $r(x)$.

\

Suppose that $\mu$ is a fixed positive measure in $\R^d$. Given an
admissible radius function $r$ in $\Omega$  we define the operators
$S$ and $M$ in $C(\overline{\Omega})$, the space of continous
functions in $\overline{\Omega}$, by

\begin{align}
Su(x) = & \frac{1}{2} \big ( \sup_{B_x}u + \inf_{B_x}u \big ) \\
Mu(x) = & \fint_{B_x}\hspace{-0.2cm}u\, d\mu
\end{align}

Now let $0\leq \alpha < 1$. This paper is concerned with the
operator $T_{\alpha}$ obtained as a convex combination of $S$ and
$M$:

\begin{equation}
T_{\alpha} = \alpha S + (1 -\alpha )M
\end{equation}
in particular with the existence  and uniqueness of the Dirichlet
problem

\begin{equation}
\begin{cases} T_{\alpha} u  & = \,  u \, \, \, \,  \text{in} \, \, \, \Omega \\
u  & = \,  f \, \, \, \,  \text{on} \, \, \,
\partial \Omega
\end{cases}
\end{equation}
in the space $C(\overline{\Omega})$, for a given boundary data $f\in
C(\partial \Omega )$.

\

The motivation for this problem comes from the study of the so
called $p$-\textit{harmonious} functions. Let us first recall the
relation between the usual mean value property and harmonic
functions. It is well known that a continous function $u$ in a
domain $\Omega \subset \R^d$ is harmonic if and only if it satisfies
the \textit{mean value property}
\begin{equation}
u(x) = \fint_{B(x,r)} \hspace{-0.3cm} u \, dm
\end{equation}
for each $x\in \Omega$ and all $r$ with $0 < r < dist(x,
\partial \Omega )$. A classical theorem of Kellogg (\cite{K}) says
that  if  $\Omega$ is bounded, $u\in C(\overline{\Omega})$ and for
each $x\in \Omega$ there is an admissible radius function $ r(x)$
such that $(1.6)$ holds, then $u$ is harmonic in $\Omega$. In other
words, assuming continuity up to the boundary, the mean value
property for a single radius(depending on the point) implies
harmonicity or, with the notation introduced above, if $\mu = m$ is
Lebesgue measure in $\R^d$, $r: \Omega \to (0, +\infty ) $ is an
admissible radius function in $\Omega$ and $M$ is the operator given
by $(1.3)$ then $Mu = u$ implies that $u$ is harmonic. Observe that
this case corresponds to $\alpha = 0$ in $(1.4)$. Kellogg's theorem
is one of the most representative results in the so called
\textit{restricted mean value property} problems in classical
function theory (see the excellent survey \cite{NV} for these and
other similar questions).

\

The other extreme case, $\alpha = 1$, has been object of increasing
attention in the last years. If $S$ is the operator given by
$(1.2)$, associated to some admissible radius function in $\Omega$,
then functions satisfying $Su =u$ are called \textit{harmonious}
functions. The functional equation $Su = u$ appears in different
contexts, related to the problem of extending a continous function
on a closed subset to the whole space respecting its modulus of
continuity(\cite{LGA}), as a \textit{Dynamic Programming Principle}
in  tug-of-war games (\cite{PSSW}, \cite{R}), as a mean value
property related to the \textit{infinity laplacian} (\cite{MPR1},
\cite{L}) and also in connection with problems of image processing
(\cite{CMS}).

\

We briefly explain why nonlinear mean value properties are connected
to some distinguished nonlinear differential operators.

\

In the linear case, it follows from Taylor's formula that if $u\in
C^2 (\Omega )$,where $\Omega \subset \R^d$, then
$$
\lim_{r\to 0} \frac{1}{r^2} \Big ( \fint_{B(x,r)} \hspace{-0.2cm}u
\, dm - u(x) \Big ) = \frac{\triangle u (x)}{2(d +2)}
$$
for $x\in \Omega$. On the other hand,

$$
\lim_{r\to 0} \frac{1}{r^2} \Big [ \frac{1}{2}( \sup_{B(x,r)}u +
\inf_{B(x,r)}u ) - u(x) \Big ] = \frac{\triangle_{\infty}u
(x)}{2|\nabla u (x) |^2}
$$
where
$$
\triangle_{\infty}u = \sum_{i,j=1}^d u_{x_i}u_{x_j}u_{x_i , x_j}
$$
is the so called \textit{infinity laplacian} of $u$ (see \cite{ACJ},
\cite{L}). Another important differential operator is the
$p$-laplacian:
$$
\triangle_p u = div( \nabla u |\nabla u |^{p-2} )
$$
where $1 < p < \infty$. Direct computation shows that
$$
\triangle_p u = |\nabla u |^{p-2} \Big ( \triangle u + (p-2)
\frac{\triangle_{\infty}u}{|\nabla u |^2} \Big )
$$
which means that the $p$-laplacian can be interpreted as a sort of
average between the usual laplacian and the infinity laplacian. It
therefore makes sense to consider averages of the operators $M$ and
$S$ as in $(1.4)$. We may wonder whether the mean value property
\begin{equation}
u(x) = \frac{\alpha}{2}\Big (\sup_{B(x,r)} u + \inf_{B(x,r)} u \Big
) + (1- \alpha ) \fint_{B(x,r)} \hspace{-0.3cm}u \, dm
\end{equation}
is related to the $p$-laplacian for some specific value of $\alpha$,
possibly depending on $d$ and $p$. If $p\geq 2$, it turns out that
this is actually the case for the value
\begin{equation}
\alpha = \frac{p-2}{d+p}
\end{equation}
See \cite{MPR1}, \cite{MPR2} for the precise interpretation of this
relation.

\

The aim of  this paper is to study the existence and uniqueness of
the Dirichlet problem for the operator $T_{\alpha}$ under
appropriate assumptions on the domain $\Omega$ and the radius
function $r(x)$.

In the case of the operator $S$ it follows, as a particular case of
results in \cite{LGA}, that if $\Omega \subset \R^d $ is a bounded
and convex domain and $r$ is an admissible radius function in
$\Omega$ satisfying $(1.1)$ then the Dirichlet problem

\begin{equation}
\begin{cases} Su   & =  \, u \, \, \, \,  \text{in} \, \, \, \Omega \\
u  & =  \, f \, \, \, \,  \text{on} \, \, \,
\partial \Omega
\end{cases}
\end{equation}
has a unique solution for each $f\in C(\overline{\Omega})$, where
$S$ is the operator given by $(1.2)$, associated to the radius
function $r$.

\

On the other hand, if $r(x)= \epsilon $ is constant then (not
necessarily continous) functions satisfying $T_{\alpha} u = u$ for
the value of $\alpha$ given by $(1.8)$ have been called
$p$-harmonious functions in \cite{MPR2}. Let $f\in C(\partial \Omega
)$ and $p\geq 2$. Since the balls $B(x, \epsilon )$ will eventually
leave $\Omega$ if $x$ is close to $\partial \Omega$, the authors in
\cite{MPR2} extend $f$ continuously to the strip
$$
\{ x\in \R^d \setminus \Omega \, : \, dist(x,
\partial \Omega ) \leq \epsilon \}
$$
and they prove that there is a unique $p$-harmonious function
$u_{\epsilon}$ having $f$ as boundary values (in this extended
sense). Furthermore, if $\Omega$ satisfies some regularity
assumptions, then  $u_{\epsilon} \to u$ uniformly in
$\overline{\Omega}$, where $u$ is the unique solution of the
Dirichlet problem
$$
\begin{cases}
\triangle_p u   & =  \, 0 \, \, \, \,  \text{in} \, \, \, \Omega \\
u  & =   \, f \, \, \, \,  \text{on} \, \, \,
\partial \Omega
\end{cases}
$$
(See \cite{MPR2}, Theorem $1.6$). In order to prove existence and
uniqueness of the $p$-harmonious Dirichlet problem, the authors in
\cite{MPR2} use the interpretation of the functional equation
$T_{\alpha}u = u$ as a dynamic programming principle for a
tug-of-war game. See also \cite{LPS} for a purely analytic proof.

\

Motivated by the classical results on the restricted mean value
property for the operator $M$ (as Kellogg's theorem) and the results
in \cite{LGA} for the operator $S$, we consider the operator
$T_{\alpha}$  in the setting of admissible radius functions in
domains instead of the constant radius case. Furthermore, this makes
possible to avoid extending the boundary function outside the
domain.


We  recall that a convex domain $\Omega \subset \R^d$ is
\textbf{strictly convex} if $\partial \Omega$ does not contain any
segment or, equivalently, if for any $x$, $y\in \partial \Omega$ the
open segment $(x,y)$ is contained in $\Omega$. For technical reasons
that will become apparent in section $2$ we also introduce the
operator
\begin{equation}
H_{\alpha} = \frac{1}{2}(I + T_{\alpha})
\end{equation}

We state now the main result of the paper.

\begin{thm}
Let $\Omega \subset \R^d$ be an strictly convex bounded domain in
$\R^d$, $0\leq \alpha < 1$, $0 < \epsilon < 1 - \alpha$  and
$\lambda , \beta $ such that
\begin{align}
1 \leq \beta <  & \frac{\log ( \frac{1}{\alpha})}{\log ( \frac{1}{1
- \epsilon})}\\
0 < \lambda < & \min \{ \epsilon , \frac{1}{\beta}\} \Big
(\frac{2}{diam (\Omega ) } \Big )^{\beta -1}
\end{align}
Suppose that  $r: \Omega \to (0, +\infty )$ is an admissible radius
function in $\Omega$ satisfying $(1.1)$ together with the following
conditions:
\begin{equation}
\lambda \, dist(x, \partial \Omega)^{\beta} \leq r(x) \leq \epsilon
\, dist(x, \partial \Omega )
\end{equation}
for all $x\in \Omega$. If $S$ and $M$ are the operators given by
$(1.2)$ and $(1.3)$, associated to the admissible radius function
$r(x)$ and to $d$-dimensional Lebesgue measure $\mu = m$ and $
\displaystyle T_{\alpha} = \alpha S + (1-\alpha) M $ then for any
$f\in C(\partial \Omega )$ the Dirichlet problem
\begin{equation}
\begin{cases}
T_{\alpha} u   & =  \, u \, \, \, \,  \text{in} \, \, \, \Omega \\
u  & =   \, f \, \, \, \,  \text{on} \, \, \,
\partial \Omega
\end{cases}
\end{equation}
has a unique solution $ \widetilde{u}\in C(\overline{\Omega})$.
Furthermore, if $u_0 \in C(\overline{ \Omega } )$ is any continuous
extension of $f$ to $\overline{\Omega}$ then $\displaystyle
H^k_{\alpha}u_0 \to \widetilde{u}$ as $k\to \infty$, uniformly in
$\overline{\Omega}$, where $H_{\alpha}$ is the operator given by
$(1.10)$.
\end{thm}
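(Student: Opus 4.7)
My plan is to establish existence by iterating $H_\alpha$ starting from a continuous extension $u_0 \in C(\overline{\Omega})$ of $f$, and uniqueness by a strong comparison principle. The role of $H_\alpha = \tfrac{1}{2}(I + T_\alpha)$ is that averaging with the identity prevents oscillation under iteration and produces a stable, effectively monotone dynamics, while the fixed points of $H_\alpha$ and $T_\alpha$ coincide.

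\textbf{Iteration and compactness.} First I would verify that both $T_\alpha$ and $H_\alpha$ are monotone on $C(\overline{\Omega})$, commute with additive constants, preserve the bounds $[\min f, \max f]$, and map $C(\overline{\Omega})$ into itself (using the lipschitz property (1.1) of $r$ to control $|Su(x) - Su(y)|$ and $|Mu(x) - Mu(y)|$). Setting $u_k = H_\alpha^k u_0$, the sequence is uniformly bounded, and the lipschitz behaviour of $x \mapsto B_x$ yields equicontinuity on compact subsets of $\Omega$. A subsequential Ascoli--Arzel\`{a} limit $\widetilde u$ then satisfies $H_\alpha \widetilde u = \widetilde u$, hence $T_\alpha \widetilde u = \widetilde u$ in $\Omega$.

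\textbf{Boundary regularity via barriers (main obstacle).} The hard part is to upgrade this convergence to be uniform on $\overline{\Omega}$ and to identify the boundary trace as $f$. Since $r(x) \to 0$ as $x \to \partial\Omega$, the equation degenerates there, and the lower bound $r(x) \geq \lambda\, dist(x,\partial\Omega)^\beta$ allows only weak averaging near the boundary, so a carefully tuned barrier is required. At each $x_0 \in \partial\Omega$, strict convexity of $\Omega$ supplies a supporting hyperplane and a point $z_0$ just outside $\overline{\Omega}$ with $|x - z_0|$ comparable to $dist(x,\partial\Omega)$ near $x_0$. I would then try a power-type barrier of the form $\phi(x) = C|x - z_0|^\gamma$ (equivalently, of size $dist(x,\partial\Omega)^\gamma$) and verify $T_\alpha \phi \leq \phi$. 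This splits into an $\alpha S\phi$ term, which contributes a factor roughly $(1-\epsilon)^\gamma$ coming from $r(x) \leq \epsilon\, dist(x,\partial\Omega)$, and a $(1-\alpha)M\phi$ term, which produces genuine averaging on scale $r(x)$. Condition (1.11), equivalent to $(1-\epsilon)^\beta > \alpha$, is precisely what makes an admissible exponent $\gamma$ exist, while (1.12) calibrates $\lambda$ so that the $M$-gain beats the $S$-loss on the scale $r(x) \geq \lambda\, dist(x,\partial\Omega)^\beta$. Such a barrier yields a modulus of continuity up to $\partial\Omega$ for all iterates, forces $\widetilde u(x) \to f(x_0)$, and together with the monotone structure of $H_\alpha$ upgrades subsequential convergence to full uniform convergence $H_\alpha^k u_0 \to \widetilde u$ on $\overline{\Omega}$.

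\textbf{Uniqueness.} If $u, v \in C(\overline{\Omega})$ both solve (1.14), set $w = u - v$, which vanishes on $\partial\Omega$. The pointwise nonlinear comparison $T_\alpha u(x) - T_\alpha v(x) \leq \alpha \sup_{B_x} w + (1-\alpha) Mw(x)$, combined with $T_\alpha u = u$ and $T_\alpha v = v$, gives $w(x) \leq \alpha \sup_{B_x} w + (1-\alpha) Mw(x)$. At an interior maximum $x^*$ of $w$ this forces $Mw(x^*) = w(x^*)$, hence $w \equiv w(x^*)$ on $B_{x^*}$ by continuity. Iterating the propagation through overlapping balls $B_x$ and invoking strict convexity of $\Omega$ to connect any interior point to $\partial\Omega$, where $w = 0$, we obtain $w \leq 0$; by symmetry $w \geq 0$, whence $u = v$.
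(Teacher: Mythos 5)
There are two genuine gaps. First, the identification of the subsequential Arzel\`a--Ascoli limit as a fixed point: from $H_\alpha^{k_j}u_0\to\widetilde u$ uniformly you can only conclude $H_\alpha^{k_j+1}u_0\to H_\alpha\widetilde u$, and there is no reason these two limits coincide unless you know the asymptotic regularity $\|H_\alpha^{k+1}u_0-H_\alpha^{k}u_0\|_\infty\to 0$. Your appeal to an ``effectively monotone dynamics'' does not supply this: $H_\alpha$ is order-preserving, but the iterates $H_\alpha^k u_0$ are not a monotone sequence, so nothing forces consecutive iterates to come together. This is precisely why the paper introduces $H_\alpha=\frac12(I+T_\alpha)$ and invokes Ishikawa's theorem (nonexpansive self-maps of bounded closed convex sets have asymptotically regular averaged iterations); without that ingredient the existence argument does not close. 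Relatedly, the interior equicontinuity of the \emph{whole sequence} of iterates is asserted rather than proved: a single application of $S$ only gives $\omega_{Su,G}\le\omega_{u,\widetilde G}$ on a \emph{larger} set, with no gain, and the actual proof needs the H\"older improvement $C\|u\|_\infty(|x-y|/r_x)^\theta$ coming from the averaging operator $M$ together with conditions $(1.11)$--$(1.13)$ to sum a geometric series $\sum_j\alpha^j(1-\epsilon)^{-\beta\theta j}$. So $(1.11)$ and $(1.12)$ are consumed by the interior estimates, not by any boundary barrier.

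Second, the proposed boundary barrier fails for a sign reason. For $\gamma>0$ and $d\ge 2$ the function $\phi(x)=C|x-z_0|^\gamma$ satisfies $\Delta\phi=C\gamma(\gamma+d-2)|x-z_0|^{\gamma-2}>0$, i.e.\ $\phi$ is subharmonic, hence $M\phi\ge\phi$ (with $\mu=m$), and the term $(1-\alpha)M\phi$ pushes in the wrong direction: averaging a subharmonic profile \emph{increases} it, it does not ``beat the $S$-loss.'' So $T_\alpha\phi\le\phi$ cannot hold as claimed, and the whole boundary-modulus argument collapses. A supersolution of the correct shape would have to look like $r_0^{-s}-|x-z_0|^{-s}$ (an exterior-ball barrier), and even then one must check the $S$-term and the uniform control over all iterates; none of this is carried out. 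The paper sidesteps barriers entirely: it shows that the graph of $T_\alpha u$ (and of $H_\alpha u$) lies in the convex hull $\Gamma_u$ of the graph of $u$, so all iterates have graphs trapped in the fixed compact convex set $\Gamma_{u_0}$, and strict convexity forces $\Gamma_{u_0}\cap(\{\xi\}\times\R)=\{(\xi,f(\xi))\}$ for $\xi\in\partial\Omega$, which yields boundary equicontinuity with no rate and no barrier. Your uniqueness argument, by contrast, is essentially the paper's comparison principle and is fine (connectedness of $\Omega$ suffices; strict convexity is not needed there), and full uniform convergence then follows from uniqueness plus equicontinuity by the standard subsequence trick rather than from any monotonicity.
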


\begin{remark}
If $\Omega \subset \R^d$ is bounded and convex and $\beta$,$\lambda$
are as in $(1.11)$ and $(1.12)$ then $\displaystyle r(x) = \lambda
\, dist(x, \partial \Omega )^{\beta} $  is an admissible radius
function in $\Omega$ satisfying the requirements in Theorem $1$.
Note that the fact that $r$ satisfies the lipschitz condition
$(1.1)$ is a consequence of $(1.12)$ and Proposition $2.2$.
\end{remark}

The key ingredients in the proof of Theorem $1$ are to show  that
the sequence of iterates $\displaystyle \{ T^k_{\alpha}u_0 \}$ is
equicontinuous in $\overline{\Omega}$ together with a  regularity
result in metric fixed point theory due to Ishikawa(Theorem $4.1$).
It should be pointed out that the arguments necessary to prove the
interior equicontinuity (section $2$) and the boundary
equicontinuity(section $3$) are different in nature. Section $2$
works for general doubling measures in $\R^d$ but requires certain
rigid assumptions on the radius function. As for section $3$, we
have adapted a clever argument in \cite{J} for the usual (linear)
mean value operator $M$(when $\mu = m$ is Lebesgue measure) to the
nonlinear operator $T_{\alpha}$. It turns out that the trick in
\cite{J} of using strict convexity to get equicontinuity of the
sequence of iterates at the boundary also works in our (nonlinear)
situation. The arguments in section $3$ work for general admissible
radius functions but they require $\mu$ to be Lebesgue measure.
However, for tentative future developments of the theory, we have
preferred to state section $2$ in the general doubling measure case.
Note that in our setting we cannot assume the a priori existence of
the solution as it happens for the usual mean value property.

\section{Interior estimates and interior equicontinuity.}

We start the section with an auxiliary result about doubling
measures.

\begin{lem}
Let $\mu$ be a doubling measure on $\R^d$. There are constants $C
>0$ and $0 < \theta \leq 1$ depending only on $\mu$ and $d$ such that, if  $a\in \R^d$ and
$0 < r \leq R$, then
\begin{equation}
\frac{\mu \big( B(a,R) \setminus B(a, r) \big ) }{\mu ( B(a, R) ) }
\leq C \Big ( \frac{R-r}{R}   \Big )^{\theta}
\end{equation}
\end{lem}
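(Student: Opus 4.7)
The plan is to establish a self-improving inequality for the annular measure and then iterate it. Writing $A(s) := B(a, R) \setminus B(a, R-s)$ for $0 \leq s \leq R$ and $g(s) := \mu(A(s))$, the aim is to produce a constant $\eta \in (0, 1)$, depending only on the doubling constant of $\mu$ and on $d$, such that
\[
g(s) \leq \eta \, g(2s) \qquad \text{whenever } 2s \leq R.
\]
Once this is in hand, $k$-fold iteration gives $g(s) \leq \eta^k g(2^k s) \leq \eta^k \mu(B(a, R))$, and choosing $k$ to be the largest integer with $2^k s \leq R$ produces $g(s) \leq \eta^{-1}(s/R)^{\log_2(1/\eta)} \mu(B(a, R))$, which on setting $s = R - r$ is precisely the claim with $\theta = \log_2(1/\eta)$. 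The regime $r < R/2$, equivalently $s > R/2$, is trivial: the left-hand side of (2.1) is bounded by $1$, while the right-hand side exceeds $C \cdot 2^{-\theta}$, so enlarging $C$ handles this case and we may assume $s \leq R/2$ throughout.

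The heart of the argument is the self-improving inequality, which exploits the geodesic (length-space) structure of $\R^d$ through a radial contraction. For $x \in A(s)$, define $\phi(x) := x - s \, (x-a)/|x-a|$, a shift of $x$ toward $a$ by exactly $s$. Since $|x-a| \in [R-s, R)$, one has $|\phi(x) - a| = |x-a| - s \in [R-2s, R-s)$, so $\phi$ sends $A(s)$ into the disjoint inner annulus $A(2s) \setminus A(s)$. The next step is to cover $A(s)$ by a Besicovitch family $\{B(x_i, s/10)\}$ with centres $x_i \in A(s)$ and multiplicity depending only on $d$. Because $|x_i - \phi(x_i)| = s$, both balls $B(x_i, s/10)$ and $B(\phi(x_i), s/10)$ are contained in $B(x_i, 2s)$, and iterated doubling yields $\mu(B(x_i, s/10)) \leq C_1 \mu(B(\phi(x_i), s/10))$ with $C_1$ depending only on the doubling constant of $\mu$. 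Summing over $i$ and controlling the multiplicity of the shifted family gives $g(s) \leq C_2 \mu(A(2s) \setminus A(s)) = C_2 (g(2s) - g(s))$, modulo a thin collar of width $s/10$, from which $g(s) \leq \frac{C_2}{1+C_2} g(2s)$ follows after absorbing the collar, yielding $\eta = C_2/(1+C_2) < 1$.

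The main obstacle is expected to be the bookkeeping inside this covering-and-shifting step. The shifted balls $B(\phi(x_i), s/10)$ are not precisely contained in $A(2s) \setminus A(s)$ but spill across its boundary by $s/10$, and the shifted family has an altered multiplicity; these issues must be handled carefully so that the geometric-series factor $\eta$ remains strictly less than $1$. This is essentially a Vitali/Besicovitch-type computation that succeeds in $\R^d$ precisely because the metric is geodesic, making the radial contraction $\phi$ well-defined and ensuring that $B(x_i, s/10)$ and $B(\phi(x_i), s/10)$ are doubling-comparable at scale $s$.
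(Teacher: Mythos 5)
Your overall architecture coincides with the paper's: both proofs reduce the lemma to a single ``halving'' inequality for outer annuli (your $g(s)\leq \eta\, g(2s)$ is literally the paper's Claim $\mu(A_{\frac{t+s}{2},s})\leq\delta\,\mu(A_{t,s})$ with $t=R-2s$) and then iterate dyadically, reading off $\theta$ from $\eta$. Where you differ is in how the halving inequality is proved. The paper covers $S^{d-1}$ by caps of angular radius $\frac{s-t}{s}$ with dimensional overlap, forms the corresponding interior and exterior spherical sectors, and compares them pairwise by doubling; there is no shift map and no collar, since each exterior sector sits inside a fixed dilate of its interior partner. You instead cover the outer annulus by small balls and push each center inward by the radial contraction $\phi$. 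This is workable, but the one issue you flag as ``bookkeeping'' is a genuine obstruction in part of your stated range: for $s$ close to $R/2$ the map $\phi$ sends the sphere $\{|x-a|=R-s+\varepsilon\}$ onto the sphere of radius $R-2s+\varepsilon$, which collapses toward $a$, so the shifted balls $B(\phi(x_i),s/10)$ can all contain $a$ and the multiplicity of the shifted family is \emph{not} bounded by a dimensional constant. The repair is cheap: restrict the halving inequality to $s\leq R/4$ (the range $s>R/4$ is already covered by your trivial case, and the iteration only needs scales $2^js\leq R/4$). There $\phi$ is bi-Lipschitz on $A(s)$ with lower constant $\frac{R-2s}{R-s}\geq\frac12$, so an $s/10$-separated net stays $s/20$-separated after shifting and the multiplicity is controlled. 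The collar spillage is likewise harmless: the shifted balls land in $B(a,R-s/2)\setminus B(a,R-3s)$, which is disjoint from $A(s/2)$, and this yields a halving inequality with base $6$ instead of $2$, changing only the value of $\theta$. With those two repairs made explicit your argument is complete; the paper's sector decomposition buys you exactly the avoidance of these two repairs, at the cost of having to set up the cap cover.
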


\begin{proof}
If $a\in \R^d$ is fixed and $0 <t< s $ , denote $A_{t,s} = B(a,s)
\setminus B(a,t)$. The lemma is a consequence of the following

\

\textbf{Claim}: there exists $0 < \delta < 1$ only depending on
$\mu$ and $d$ such that if $0 < t < s$ then

\begin{equation}
\mu  \big( A_{\frac{t+s}{2}, s} \big ) \leq \delta \mu \big (
A_{t,s}\big )
\end{equation}

\

To prove the claim, choose first a finite number of points $\{ \xi_k
\}_{k=1}^{N}$ on the unit sphere $S^{d-1}$ in such a way that the
spherical caps $\displaystyle C_k = B( \xi_k , \frac{s-t}{s}) \cap
S^{d-1} $ cover $S^{d-1}$ with finite overlapping(overlapping number
only depending on $d$).Now define the interior and exterior
spherical sectors as

\begin{align*}
Q^{in}_k = & a + \{ \rho \eta \, : \rho \in \big (t , \frac{t+s}{2}
\big ) ,
\eta \in C_k \} \\
Q^{ex}_k = & a + \{ \rho \eta \, : \rho \in \big (\frac{t+s}{2} , s
\big ) , \eta \in C_k \}
\end{align*}

By the doubling property, there is a  constant $D\geq 1$ such that ,
for each $k$, we have
$$
\mu (Q^{ex}_k ) \leq D \, \mu ( Q^{in}_k )
$$
Summing up over $k$ we get
$$
\mu \big ( A_{\frac{t+s}{2} , s} \big ) \leq D \, \mu \big ( A_{t,
\frac{t+s}{2}} \big )
$$
In particular
$$
\mu \big ( A_{\frac{t+s}{2}, s} \big ) \leq \frac{D}{D+1} \mu \big (
A_{t,s}\big )
$$
and $(2.2)$ follows by taking $ \delta = \frac{D}{D+1} $. Now, to
prove the lemma from the claim choose an integer $m$ such that
$$
(1 - 2^{-m})R < r \leq (1 - 2^{-(m+1)})R
$$
and apply the claim iteratively to the spherical shells
$\displaystyle A_{(1-2^{-k})R, R}$ for $0\leq k \leq m$ to obtain
\begin{equation}
\mu \big ( A_{r,R} \big ) \leq \mu \big ( A_{(1- 2^{-m})R, R}\big )
\leq \delta^m \mu (B(a,R))
\end{equation}
Then $(2.1)$ follows from $(2.3)$ by taking $\delta = 2^{-\theta}$
and $C = \delta^{-1}$.

\end{proof}


\begin{lem}
Let $\mu$ be a doubling measure on $\R^d$. There are constants $C>0$
and $0 < \theta \leq 1$ depending only on $\mu$ and $d$ such that if
$x$, $y\in \R^d$, $r_x>0$, $r_y>0$ and
\begin{equation}
|r_x - r_y | \leq |x-y| \leq \frac{r_x}{2}
\end{equation}
then
$$
\frac{\mu \big(B(x, r_x ) \setminus B(y, r_y )\big)}{\mu \big ( B(x,
r_x ) \big ) }  + \frac{\mu \big(B(y, r_y ) \setminus B(x,
r_x)\big)}{\mu \big ( B(y, r_y ) \big ) } \leq C \Big (
\frac{|x-y|}{r_x} \Big )^{\theta}
$$
\end{lem}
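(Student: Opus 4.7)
The plan is to reduce the estimate on symmetric differences of the two balls to the annular estimate of Lemma~2.1, by showing each piece of the symmetric difference sits inside a thin annulus centered at a single point. The hypothesis $|x-y|\le r_x/2$ combined with $|r_x-r_y|\le|x-y|$ will also give the crucial comparison $r_y\ge r_x/2$, which allows me to convert between radii at the end.

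Write $d=|x-y|$. For the first term, if $z\in B(x,r_x)\setminus B(y,r_y)$, then by the triangle inequality $|z-x|\ge|z-y|-|x-y|\ge r_y-d\ge r_x-2d$, where in the last step I used $r_y\ge r_x-d$. Since $d\le r_x/2$, the inner radius $r_x-2d$ is nonnegative, so
\[
B(x,r_x)\setminus B(y,r_y)\subset B(x,r_x)\setminus B(x,r_x-2d).
\]
Applying Lemma~2.1 with $a=x$, $R=r_x$, and inner radius $r_x-2d$ yields
\[
\frac{\mu\bigl(B(x,r_x)\setminus B(y,r_y)\bigr)}{\mu(B(x,r_x))}\le C\Bigl(\frac{2d}{r_x}\Bigr)^{\theta}.
\]

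Symmetrically, if $z\in B(y,r_y)\setminus B(x,r_x)$, then $|z-y|\ge|z-x|-|x-y|\ge r_x-d$, and since $r_x-d\ge r_x/2>0$ we get the inclusion $B(y,r_y)\setminus B(x,r_x)\subset B(y,r_y)\setminus B(y,r_x-d)$. Here the outer radius is $r_y$ and the inner is $r_x-d$, and the difference between them is at most $r_y-(r_x-d)\le 2d$ because $r_y-r_x\le d$. Lemma~2.1 then gives
\[
\frac{\mu\bigl(B(y,r_y)\setminus B(x,r_x)\bigr)}{\mu(B(y,r_y))}\le C\Bigl(\frac{2d}{r_y}\Bigr)^{\theta}\le C\Bigl(\frac{4d}{r_x}\Bigr)^{\theta},
\]
where in the last inequality I invoked $r_y\ge r_x/2$. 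Adding the two bounds produces the desired estimate after adjusting the constant $C$.

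There is no real obstacle here; the only subtlety is making sure the inner radii of the annuli are nonnegative, which is exactly what the hypothesis $|x-y|\le r_x/2$ guarantees, and that the comparison $r_y\ge r_x/2$ is used to absorb the $r_y$ in the denominator into $r_x$.
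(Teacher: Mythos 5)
Your proof is correct and follows essentially the same route as the paper's: both reduce each piece of the symmetric difference to an annulus of width at most $2|x-y|$ centered at $x$ (resp.\ $y$) via the triangle inequality, apply Lemma~2.1, and use $r_y\ge r_x/2$ to replace $r_y$ by $r_x$ in the second bound. The only cosmetic difference is your choice of inner radius $r_x-2|x-y|$ instead of the paper's $r_y-|x-y|$ in the first inclusion, which changes nothing.
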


\begin{proof}
Observe that, from  $(2.4)$, $\displaystyle r_y \geq \frac{r_x}{2}$
and
\begin{align*}
B(x, r_x ) \setminus B(y, r_y ) & \subset B(x, r_x ) \setminus B(x,
r_y - |x-y| ) \\
B(y, r_y ) \setminus B(x, r_x ) & \subset B(y, r_y ) \setminus B(y,
r_x - |x-y| )
\end{align*}
 The conclusion follows from $(2.4)$ and Lemma $2.1$ with the
choices $R= r_x$, $r= r_y - |x-y|$ and $R=r_y$, $r= r_x - |x-y|$.
\end{proof}

\

The following lemma provides an interior estimate of the modulus of
continuity of $Mu$ where $M$ is the operator given by $(1.3)$.
\begin{lem}
Let $\mu$ be a doubling measure on $\R^d$, $\Omega \subset \R^d$ a
domain, $r$ an admissible radius function in $\Omega$ satisfying
$(1.1)$ and $u\in L^{\infty}(\Omega )$. There are constants $C>0$
and $0 < \theta \leq 1$, only depending on $\mu$ and $d$, such that,
if $x$, $y\in \Omega$, then
\begin{equation}
|Mu(x) - Mu(y)| \leq C ||u||_{\infty} \Big ( \frac{|x-y|}{r_x} \Big
)^{\theta}
\end{equation}
\end{lem}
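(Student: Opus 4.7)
The plan is to bound $|Mu(x)-Mu(y)|$ by a direct decomposition, reducing to the measure-theoretic estimate from Lemma $2.2$. First I would split into two regimes according to how $|x-y|$ compares to $r_x$. If $|x-y| > r_x/2$, then $(|x-y|/r_x)^{\theta} > 2^{-\theta}$, so the trivial bound $|Mu(x)-Mu(y)| \leq 2\|u\|_{\infty}$ already gives $(2.5)$ with any $C \geq 2^{1+\theta}$; no further work is needed.

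In the remaining regime $|x-y| \leq r_x/2$, the Lipschitz condition $(1.1)$ automatically supplies $|r_x-r_y| \leq |x-y| \leq r_x/2$, so the hypothesis $(2.4)$ of Lemma $2.2$ is fulfilled. This is the crucial input: it gives us quantitative control on the symmetric difference of the two averaging balls.

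Then I would write
\begin{align*}
Mu(x)-Mu(y) = & \, \Bigl( \frac{1}{\mu(B_x)} - \frac{1}{\mu(B_y)} \Bigr) \int_{B_x\cap B_y} u \, d\mu \\
 & + \frac{1}{\mu(B_x)} \int_{B_x \setminus B_y} u \, d\mu - \frac{1}{\mu(B_y)} \int_{B_y \setminus B_x} u \, d\mu.
\end{align*}
The last two terms are immediately estimated in absolute value by $\|u\|_{\infty}$ times $\mu(B_x \setminus B_y)/\mu(B_x)$ and $\mu(B_y \setminus B_x)/\mu(B_y)$ respectively, which by Lemma $2.2$ yield exactly the right-hand side of $(2.5)$. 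For the first term, I would use $|\mu(B_x)-\mu(B_y)| \leq \mu(B_x\setminus B_y)+\mu(B_y\setminus B_x)$ and the trivial $|\int_{B_x\cap B_y} u\, d\mu| \leq \|u\|_{\infty} \min\{\mu(B_x),\mu(B_y)\}$, so that
$$
\Bigl| \frac{1}{\mu(B_x)} - \frac{1}{\mu(B_y)} \Bigr| \Bigl| \int_{B_x\cap B_y} u\, d\mu \Bigr| \leq \|u\|_{\infty}\Bigl(\frac{\mu(B_x\setminus B_y)}{\mu(B_x)} + \frac{\mu(B_y\setminus B_x)}{\mu(B_y)}\Bigr),
$$
and again Lemma $2.2$ closes the estimate.

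I do not anticipate any genuine obstacle here; the only mild subtlety is handling the large-displacement case $|x-y|>r_x/2$ separately so that the hypothesis of Lemma $2.2$ is applicable in the remaining range, and then being slightly careful that in the first-term bound the denominator $\mu(B_x)\mu(B_y)$ cancels with $\mu(B_x\cap B_y)$ (using $\mu(B_x)\asymp\mu(B_y)$ which follows from doubling since $r_y\geq r_x/2$ and $B_y\subset B(x,2r_x)$).
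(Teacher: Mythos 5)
Your proof is correct and follows essentially the same route as the paper: the identical three-term decomposition of $Mu(x)-Mu(y)$ over $B_x\setminus B_y$, $B_y\setminus B_x$ and $B_x\cap B_y$, reduced to Lemma $2.2$ after disposing of the case $|x-y|>r_x/2$ trivially. (The final aside about needing $\mu(B_x)\asymp\mu(B_y)$ is superfluous, since your own bound via $\min\{\mu(B_x),\mu(B_y)\}/(\mu(B_x)\mu(B_y))=1/\max\{\mu(B_x),\mu(B_y)\}$ already closes the estimate without invoking doubling there.)
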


\begin{proof}
Let $x$, $y$, $B_x = B(x, r_x )$ and $B_y = B(y, r_y )$. We can
assume that $\displaystyle |x-y| \leq \frac{r_x}{2}$ since otherwise
the conclusion is trivial. A simple computation gives
\begin{align*}
& Mu(x) - Mu(y) =   \fint_{B_x}\hspace{-0.2cm}u\, d\mu - \fint_{B_y}\hspace{-0.2cm}u\,  d\mu   = \\
&    \frac{1}{\mu (B_x )}\int_{B_x \setminus B_y} \hspace{-0.5cm} u
\, d\mu - \frac{1}{\mu (B_y )}\int_{B_y \setminus B_x}
\hspace{-0.5cm}u \, d\mu + \frac{\mu (B_y ) - \mu (B_x )}{\mu (B_x)
\mu (B_y)} \int_{B_x \cap B_y}\hspace{-0.6cm}u \, d\mu
\end{align*}
In particular
\begin{align*}
 & |Mu(x) - Mu(y)|  \leq \\
& ||u||_{\infty} \Big [ \frac{\mu (B_x \setminus B_y ) }{\mu (B_x )}
+ \frac{\mu (B_y \setminus B_x )}{\mu (B_y )} + \frac{\mu (B_x
\setminus B_y ) + \mu (B_y \setminus B_x )
}{max\{\mu (B_x ) , \mu (B_y )\}} \Big ]  \leq \\
 &   2||u||_{\infty}\Big [  \frac{\mu (B_x \setminus B_y ) }{\mu
(B_x )} + \frac{\mu (B_y \setminus B_x )}{\mu (B_y )} \Big ]
\end{align*}
so $(2.5)$ follows from Lemma $2.2$.
\end{proof}

We recall now that a  \textit{concave modulus of continuity} is a
non-decreasing concave function $\omega : [0, +\infty ) \to [0,
+\infty )$ such that $\omega (0) = 0$. If $\Omega \subset \R^d $ is
convex and $u\in C(\Omega )$ we will denote by $\displaystyle
\omega_{u, \Omega}$ the (lowest) concave modulus of continuity of
$u$ in $\Omega$ so, in particular
$$
|u(x) - u(y)| \leq \omega_{u, \Omega}(|x-y|)
$$
for $x$, $y\in \Omega$.

Consider the operators $S$, $M$ and $T_{\alpha}$ given by $(1.2)$,
$(1.3)$ and $(1.4)$ respectively, where $0\leq \alpha < 1$. Suppose
now that $\Omega\subset \R^d$ is convex, $r$ is an admissible radius
function in $\Omega$ satisfying the lipschitz condition $(1.1)$,
$u\in C(\overline{\Omega})$ and $G$ is a proper convex subdomain of
$\Omega$. If $x$,$y\in G$ then the proof of Proposition $3.2$ in
\cite{LGA} actually shows that
\begin{equation}
|Su(x) - Su(y)| \leq \omega_{u, \widetilde{G}} ( |x-y| )
\end{equation}
where $\widetilde{G}$ is the convex hull of $\displaystyle
\bigcup_{x\in G} B_x $ and
 $\omega_{u, \widetilde{G}}$ stands for the (concave) modulus of continuity of
$u$ in $\widetilde{G}$ (see \cite{LGA}).

\begin{prop}
Let $\mu$ be a doubling measure on $\R^d$, $\Omega \subset \R^d$ a
bounded, convex domain domain, $r: \Omega \to (0, +\infty )  $ an
admissible radius function in $\Omega$ satisfying $(1.1)$, $u\in
C(\overline{\Omega} )$ and $G \Subset \Omega$ a proper convex
sub-domain of $\Omega$.  There are constants $C>0$ and $ 0 < \theta
\leq 1$ only depending on $\mu$ and $d$ such that, if $x$, $y\in G$
then
\begin{equation}
|T_{\alpha}u(x) - T_{\alpha}u(y)| \leq  \alpha \omega_{u,
\widetilde{G}} (|x-y| ) + (1-\alpha ) C ||u||_{\infty} \Big (
\frac{|x-y|}{r_x} \Big )^{\theta}
\end{equation}
where
$$
\widetilde{G} = co \big ( \bigcup_{x\in G} B_x \big )
$$
In particular, if $r_x \geq t_1 >0$ for all $x\in G$ then
\begin{equation}
\omega_{T_{\alpha}u, G} (t) \leq \alpha \omega_{u, \widetilde{G}}(t)
+ (1-\alpha )C||u||_{\infty}t_1^{-\theta} t^{\theta}
\end{equation}
for $0 \leq t \leq diam (G)$.
\end{prop}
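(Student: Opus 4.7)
The plan is to decompose $T_\alpha u = \alpha Su + (1-\alpha) Mu$, apply the triangle inequality, and then bound the two pieces using results already established. Specifically, for $x, y \in G$, I would write
\begin{equation*}
|T_\alpha u(x) - T_\alpha u(y)| \leq \alpha \, |Su(x) - Su(y)| + (1-\alpha) \, |Mu(x) - Mu(y)|,
\end{equation*}
and then invoke the estimate $(2.6)$ from \cite{LGA} to bound the $S$-term by $\omega_{u,\widetilde{G}}(|x-y|)$, and Lemma $2.3$ to bound the $M$-term by $C\|u\|_\infty (|x-y|/r_x)^\theta$. Note that $G \Subset \Omega$ and the lipschitz condition $(1.1)$ ensure that $r_x$ is uniformly bounded away from $0$ on $G$, so these estimates apply. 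Adding the two contributions yields $(2.7)$ directly.

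The invocation of $(2.6)$ requires that $S$ be evaluated on the closed convex set $\widetilde{G} = \mathrm{co}\bigl(\bigcup_{x \in G} B_x\bigr)$, since $\sup_{B_x}u$ and $\inf_{B_x}u$ depend on the values of $u$ on all balls $B_x$ with $x \in G$. This is exactly why $\widetilde{G}$ is introduced. For Lemma $2.3$, one small technical point is that it is stated under the hypothesis $|x-y| \leq r_x/2$, but the proof of that lemma already observes that the conclusion is trivial otherwise (after adjusting constants), so the estimate holds for all $x,y \in G$ without restriction.

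For the second assertion $(2.8)$, assume $r_x \geq t_1$ for all $x \in G$. Then $(|x-y|/r_x)^\theta \leq t_1^{-\theta} |x-y|^\theta$, so $(2.7)$ gives
\begin{equation*}
|T_\alpha u(x) - T_\alpha u(y)| \leq \alpha \, \omega_{u,\widetilde{G}}(|x-y|) + (1-\alpha) C \|u\|_\infty t_1^{-\theta} |x-y|^\theta.
\end{equation*}
The right-hand side, viewed as a function of $t = |x-y|$, is the sum of two non-decreasing concave functions that vanish at $t=0$ (recall $\theta \in (0,1]$ so $t \mapsto t^\theta$ is concave, and $\omega_{u,\widetilde{G}}$ is concave by construction), hence is itself a concave modulus of continuity on $[0, \mathrm{diam}(G)]$. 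Since $\omega_{T_\alpha u, G}$ is by definition the smallest such concave majorant of the modulus of continuity of $T_\alpha u$ on $G$, the inequality $(2.8)$ follows.

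There is no real obstacle here: the work is entirely carried by Lemma $2.3$ and $(2.6)$, and the proof reduces to a careful bookkeeping of the triangle inequality together with the observation that concavity of the bound in $|x-y|$ is preserved under the sum, which is what permits passing from a pointwise estimate to an estimate on the concave modulus of continuity.
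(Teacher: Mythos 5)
Your proof is correct and takes exactly the paper's route: the paper's own proof of this proposition is the one-line ``Combine $(2.6)$ and Lemma $2.3$,'' which is precisely your decomposition $T_\alpha = \alpha S + (1-\alpha)M$ followed by the triangle inequality. The extra detail you supply --- handling the case $|x-y| > r_x/2$ of Lemma $2.3$ and passing from the pointwise bound to the concave modulus of continuity via the fact that a sum of concave moduli is again a concave modulus --- correctly fills in steps the paper leaves implicit.
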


\begin{proof}
Combine $(2.6)$ and Lemma $2.3$.
\end{proof}

The next proposition justifies the choice of the normalization
constant $\lambda$ in $(1.12)$ and $(1.13)$.It implies in particular
that (admissible) radius functions of the form $r(x) = \lambda \,
dist(x,\partial \Omega )^{\beta}$ satisfy the lipschitz condition
$(1.1)$.

\begin{prop}
Let $\Omega \subset \R^d$ be a bounded, convex domain and let $\beta
\geq 1$. Then for every $x$, $y\in \Omega$, the following inequality
holds
$$
|dist(x, \partial \Omega )^{\beta } - dist(y, \partial \Omega
)^{\beta} | \leq \beta \Big ( \frac{diam (\Omega) }{2}\Big)^{\beta
-1} |x-y|
$$
In particular, the function $ \lambda_{\Omega , \beta} \, dist (
\cdot
 , \partial \Omega )^{\beta}$
is lipschitz with constant $1$ in $\Omega$, where

\begin{equation}
\lambda_{\Omega , \beta} = \frac{1}{\beta} \Big ( \frac{2}{diam
(\Omega ) } \Big ) ^{\beta -1}
\end{equation}
\end{prop}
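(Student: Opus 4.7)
The plan is to combine two ingredients: the standard fact that the distance to the boundary is $1$-Lipschitz, and the elementary pointwise inequality
$$\bigl| a^{\beta} - b^{\beta}\bigr| \leq \beta \, \max(a,b)^{\beta-1}\, |a-b|$$
valid for all $a,b \geq 0$ and $\beta \geq 1$ (which I would justify in one line by writing, for $a\geq b$, $a^{\beta}-b^{\beta} = \int_{b}^{a}\beta t^{\beta -1}\, dt$ and bounding $t^{\beta-1}\leq a^{\beta-1}$). Setting $d(z) := dist(z,\partial\Omega)$, I would apply this with $a = \max(d(x),d(y))$ and $b = \min(d(x),d(y))$ to get
$$\bigl| d(x)^{\beta}-d(y)^{\beta}\bigr| \leq \beta\,\max(d(x),d(y))^{\beta-1}\bigl|d(x)-d(y)\bigr| \leq \beta\,\max(d(x),d(y))^{\beta-1}|x-y|,$$
where the last inequality is the $1$-Lipschitz property of $d$.

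The only non-trivial step is then to bound $\max(d(x),d(y))^{\beta-1}$ by $\bigl(diam(\Omega)/2\bigr)^{\beta-1}$, i.e.\ to prove that for any $z\in\Omega$ convex and bounded,
$$d(z) \leq \frac{diam(\Omega)}{2}.$$
This is the main (but still elementary) obstacle. I would argue as follows: pick any line $\ell$ through $z$; since $\Omega$ is bounded, $\ell\cap\overline{\Omega}$ is a closed segment $[p,q]$ with $p,q\in\partial\Omega$ (using convexity to ensure it is a single segment) and of length at most $diam(\Omega)$. Because $z\in[p,q]$, the closer of the two endpoints satisfies $\min(|z-p|,|z-q|)\leq |p-q|/2\leq diam(\Omega)/2$, and this endpoint lies on $\partial\Omega$, proving the claim. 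Combining this bound with the estimate above yields
$$\bigl| d(x)^{\beta}-d(y)^{\beta}\bigr| \leq \beta\Bigl(\frac{diam(\Omega)}{2}\Bigr)^{\beta-1}|x-y|,$$
which is the desired inequality.

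Finally, the ``in particular'' statement is immediate: multiplying both sides by $\lambda_{\Omega,\beta}$ as defined in $(2.9)$ cancels the constant $\beta\bigl(diam(\Omega)/2\bigr)^{\beta-1}$, leaving Lipschitz constant exactly $1$ for the function $\lambda_{\Omega,\beta}\, dist(\cdot,\partial\Omega)^{\beta}$ on $\Omega$, as claimed.
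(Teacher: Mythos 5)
Your proof is correct and follows essentially the same route as the paper's: the paper invokes the mean value theorem for $t\mapsto t^{\beta}$ (your integral representation is the same elementary estimate), the $1$-Lipschitz property of $dist(\cdot,\partial\Omega)$, and the bound $dist(z,\partial\Omega)\leq \frac{1}{2}diam(\Omega)$. Your justification of that last bound is fine, though it can be obtained even more directly (and without convexity) by noting that $B(z,dist(z,\partial\Omega))\subset\Omega$ forces $2\,dist(z,\partial\Omega)\leq diam(\Omega)$.
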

\begin{proof}
Use the mean value theorem applied to the function $t\to t^{\beta}$
together with the fact that $\displaystyle dist(x, \partial \Omega )
\leq \frac{1}{2}diam (\Omega ) $ for each $x\in \Omega$.
\end{proof}

Let $\Omega \subset \R^d$ a bounded, convex domain and $0< \epsilon
< 1$. For $n \in \N$ define
\begin{equation}
\Omega_n = \{ x\in \Omega : \, dist(x, \partial \Omega ) >
(1-\epsilon )^n \}
\end{equation}
Observe that there is $n_0 = n_0 (\epsilon , \Omega ) $ such that
$diam (\Omega _n ) \geq \frac{1}{2}diam (\Omega )$ if $n \geq n_0$.

\begin{prop}
Let $0\leq \alpha < 1$, $0 < \epsilon < 1 -\alpha$ and suppose that
$\beta \geq 1$ and $\lambda >0$ satisfy $(1.12)$.
Let $\mu$ be a doubling measure on $\R^d$, $\Omega \subset \R^d$  a
bounded, convex domain and $r$ an admissible radius function in
$\Omega$ satisfying $(1.1)$ and $(1.13)$ for all $x\in \Omega$.
Then there are constants $C>0$ and $0< \theta \leq 1$ depending only
on $\mu$ and $d$ such that for any $u\in C(\overline{\Omega )}$ and
each $0 \leq t \leq \frac{1}{2}diam(\Omega )$ we have
\begin{equation}
\omega_{T_{\alpha} u , \Omega_n}(t) \leq \alpha \omega_{u,
\Omega_{n+1}}(t) + (1-\alpha
)C||u||_{\infty}\lambda^{-\theta}(1-\epsilon )^{-n\beta \theta} \,
t^{\theta}
\end{equation}
where $n\geq n_0$ and $\Omega_n$ is as in $(2.10)$.
\end{prop}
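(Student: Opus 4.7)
The plan is to apply Proposition~$2.6$ with $G:=\Omega_n$, extracting $(2.11)$ by identifying the lower bound $t_1$ on $r_x$, and the enlarged set $\widetilde G$, directly in terms of the parameters of $\Omega_{n+1}$. The two-sided hypothesis $(1.13)$ on $r$ is tailor-made for this: its lower half produces $t_1$, and its upper half produces the containment $\widetilde G\subset\Omega_{n+1}$.

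First I would verify that $G:=\Omega_n$ is a legitimate choice of proper convex subdomain. Because $\Omega$ is convex, $\mathrm{dist}(\cdot,\partial\Omega)$ is a concave function on $\Omega$, so each superlevel set $\Omega_k$ is open and convex; for $n\geq n_0$ it is non-empty and satisfies $\Omega_n\Subset\Omega$, hence is a proper convex subdomain. The condition $n\geq n_0$ also gives $\mathrm{diam}(\Omega_n)\geq\tfrac{1}{2}\mathrm{diam}(\Omega)$, so that the range $0\leq t\leq \tfrac12\mathrm{diam}(\Omega)$ in the conclusion falls inside the range $0\leq t\leq\mathrm{diam}(G)$ of validity of $(2.8)$.

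Next I would produce the two ingredients required in $(2.8)$. For the lower bound on $r_x$, the left inequality in $(1.13)$ yields $r_x\geq \lambda(1-\epsilon)^{n\beta}$ for every $x\in\Omega_n$, so I take $t_1=\lambda(1-\epsilon)^{n\beta}$, which produces precisely the factor $\lambda^{-\theta}(1-\epsilon)^{-n\beta\theta}$ appearing in the bound. For the set $\widetilde G$, I use the right inequality in $(1.13)$: if $x\in\Omega_n$ and $y\in B_x$ then
$$\mathrm{dist}(y,\partial\Omega)\geq \mathrm{dist}(x,\partial\Omega)-r_x\geq (1-\epsilon)\,\mathrm{dist}(x,\partial\Omega)>(1-\epsilon)^{n+1},$$
so $B_x\subset\Omega_{n+1}$ for every $x\in\Omega_n$. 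Since $\Omega_{n+1}$ is itself convex, the convex hull $\widetilde G=\mathrm{co}\bigl(\bigcup_{x\in\Omega_n}B_x\bigr)$ remains inside $\Omega_{n+1}$, and consequently $\omega_{u,\widetilde G}(t)\leq \omega_{u,\Omega_{n+1}}(t)$. Feeding these two ingredients into $(2.8)$ yields $(2.11)$.

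The only genuine obstacle is the inclusion $\widetilde G\subset\Omega_{n+1}$: without the convexity of $\Omega_{n+1}$, which itself rests on the concavity of $\mathrm{dist}(\cdot,\partial\Omega)$ on a convex domain, one would only be able to absorb the union $\bigcup_{x\in\Omega_n}B_x$ into $\Omega_{n+1}$, not its convex hull, and the modulus-of-continuity term in $(2.8)$ would not collapse to $\omega_{u,\Omega_{n+1}}$. Once this geometric step is in hand, the proof is a single invocation of Proposition~$2.6$.
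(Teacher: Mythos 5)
Your proof is correct and takes essentially the same route as the paper: apply estimate $(2.8)$ of Proposition $2.1$ (which you mislabel as Proposition $2.6$) to $G=\Omega_n$ with $t_1=\lambda(1-\epsilon)^{n\beta}$, using the upper half of $(1.13)$ to force $\widetilde{G}\subset\Omega_{n+1}$. The paper's proof is a one-line citation that leaves the inclusion $\widetilde{G}\subset\Omega_{n+1}$ and the convexity of $\Omega_{n+1}$ implicit; you supply those details correctly.
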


\begin{proof}
$(2.11)$ is consequence of $(2.8)$, condition $(1.13)$ and
Proposition $2.1$ applied to the convex subdomain $\Omega_n$ with
the choice $\displaystyle t_1 = \lambda (1-\epsilon )^{n\beta}$.

\end{proof}

We now iterate $(2.11)$.

\begin{prop}

Let $0\leq \alpha < 1$, $0 < \epsilon < 1 -\alpha$ and $\beta$,
$\lambda$ satisfying $(1.11)$ and $(1.12)$.
Let $\mu$ be a doubling measure on $\R^d$, $\Omega \subset \R^d$  a
bounded, convex domain and $r$ an admissible radius function in
$\Omega$ satisfying $(1.1)$ and $(1.13)$ for all $x\in \Omega$. Let
$G\Subset \Omega$ be a convex domain with $diam(G) \geq
\frac{1}{2}diam(\Omega )$. Then there are constants $0< \theta \leq
1$, only depending on $\mu$ and $d$ and $A>0$, depending on
$\Omega$, $\alpha$, $\beta$, $\epsilon$, $\lambda$, $\mu$, $d$ and
$G$ such that for any $u\in C(\overline{\Omega})$, and any $k\geq 1
$ we have

\begin{equation}
 \omega_{T^k_{\alpha}u, G } (t) \leq \alpha^k \omega_{u,
 \Omega}(t) + A ||u||_{\infty} t^{\theta}
\end{equation}
for $0 \leq t \leq \frac{1}{2}diam(\Omega )$, where $T^k_{\alpha} u$
stands for the $k$-th. iterate of the operator $T_{\alpha}$.
\end{prop}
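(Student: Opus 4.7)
The proof is a direct iteration of the inequality $(2.11)$ of the previous proposition along the nested scale of sub-domains $\{\Omega_n\}$. The only property beyond $(2.11)$ that I need is that both $S$ and $M$ preserve $L^\infty$-bounds, so $\|T^j_\alpha u\|_\infty \le \|u\|_\infty$ for every $j \ge 0$; hence the error term coming from each application of $(2.11)$ can be bounded using $\|u\|_\infty$ rather than the intermediate iterate $\|T^j_\alpha u\|_\infty$.

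First I would fix an integer $n_0 \ge n_0(\epsilon,\Omega)$ large enough that $G \subset \Omega_{n_0}$; this is possible since $G \Subset \Omega$ makes $dist(\cdot,\partial\Omega)$ bounded below on $\overline G$ by some $\delta>0$, so any $n_0$ with $(1-\epsilon)^{n_0}<\delta$ works (and we retain $diam(\Omega_n)\ge\tfrac12 diam(\Omega)$ for $n\ge n_0$). Because restricting the domain can only decrease the modulus of continuity, $\omega_{T^k_\alpha u,G}(t) \le \omega_{T^k_\alpha u,\Omega_{n_0}}(t)$, and it suffices to bound the latter.

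I then unfold $(2.11)$ $k$ times, dropping from $\Omega_{n_0}$ to $\Omega_{n_0+1}$ to $\ldots$ to $\Omega_{n_0+k}$. Writing $\Phi_j=\omega_{T^{k-j}_\alpha u,\Omega_{n_0+j}}(t)$, each application of $(2.11)$ gives $\Phi_j \le \alpha\Phi_{j+1}+E_j$ with $E_j\le (1-\alpha)C\|u\|_\infty \lambda^{-\theta}(1-\epsilon)^{-(n_0+j)\beta\theta}t^\theta$, and telescoping yields
\begin{equation*}
\omega_{T^k_\alpha u,\Omega_{n_0}}(t) \le \alpha^k\, \omega_{u,\Omega_{n_0+k}}(t) + (1-\alpha) C\|u\|_\infty \lambda^{-\theta} t^\theta \sum_{j=0}^{k-1} \alpha^j (1-\epsilon)^{-(n_0+j)\beta\theta}.
\end{equation*}
The first term is dominated by $\alpha^k\omega_{u,\Omega}(t)$ since $\Omega_{n_0+k}\subset\Omega$, while the sum is a partial geometric series with common ratio $\alpha(1-\epsilon)^{-\beta\theta}$.

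The only real obstacle is the convergence of this geometric tail uniformly in $k$: one needs $\alpha(1-\epsilon)^{-\beta\theta}<1$, equivalently $\beta\theta < \log(1/\alpha)/\log(1/(1-\epsilon))$. Since the exponent $\theta$ from Lemma $2.1$ satisfies $\theta\le 1$ and the hypothesis $(1.11)$ provides $\beta<\log(1/\alpha)/\log(1/(1-\epsilon))$, the strict inequality holds; this is precisely the reason the upper bound on $\beta$ is imposed in $(1.11)$. Summing the tail and bundling all resulting factors into a single constant $A = (1-\alpha)C\lambda^{-\theta}(1-\epsilon)^{-n_0\beta\theta}/(1-\alpha(1-\epsilon)^{-\beta\theta})$, depending on $\Omega,\alpha,\beta,\epsilon,\lambda,\mu,d$ and on $G$ (through $n_0$), then yields $(2.12)$.
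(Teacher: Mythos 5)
Your proof is correct and follows essentially the same route as the paper: iterate $(2.11)$ down the chain $\Omega_{n_0}\subset\Omega_{n_0+1}\subset\cdots$, bound the resulting geometric sum using $(1.11)$, and absorb everything into the constant $A$. The only (harmless) cosmetic difference is that you sum the series with ratio $\alpha(1-\epsilon)^{-\beta\theta}$ directly, whereas the paper first enlarges the ratio to $\alpha(1-\epsilon)^{-\beta}$ (still $<1$ by $(1.11)$), yielding a slightly larger but equally valid $A$.
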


\begin{proof}

Choose $n\geq n_0$ such that  $ G \subset \Omega_n$, where
$\Omega_n$ is given by $(2.10)$. We will show that
\begin{equation}
 \omega_{T^k_{\alpha}u, \Omega_n } (t) \leq \alpha^k \omega_{u,
 \Omega}(t) + (1-\alpha )\frac{C ||u||_{\infty}(1- \epsilon )^{-n\beta \theta}\lambda^{-\theta}}
 {1 - \alpha (1- \epsilon ) ^{-\beta}} \,  t^{\theta}
\end{equation}
Given $G$ as in the statement of the proposition, fix $n$ so that $G
\subset \Omega_n $. Then $(2.12)$ follows from $(2.13)$ by choosing
$$
 A = (1-\alpha )\frac{C (1- \epsilon )^{-n\beta
\theta}\lambda^{-\theta}}
 {1 - \alpha (1- \epsilon ) ^{-\beta }}
$$
To prove $(2.13)$ we iterate  $(2.11)$ to obtain

\begin{equation}
\omega_{T_{\alpha}^k u , \Omega_n} (t) \leq \alpha^k \omega_{u,
\Omega_{n+k}}(t) + B ||u||_{\infty}\, t^{\theta}
\sum_{j=0}^{k-1}\alpha^j (1-\epsilon )^{-\beta \theta j}
\end{equation}
where $\displaystyle B = (1-\alpha ) C \lambda^{-\theta} (1-\epsilon
)^{-n\beta \theta}$. Then, $(2.13)$ follows from $(2.14)$, $(1.11)$
and the fact that $\omega_{u, \Omega_{n+k}}(\cdot ) \leq \omega_{u,
\Omega}(\cdot )$.
\end{proof}

The following proposition is the analogous of Proposition $2.4$ for
the operator $H_{\alpha}$ given by $(1.10)$.

\begin{prop}
Let $\alpha$, $\beta$, $\epsilon$, $\lambda$, $\mu$, $\theta$,
$\Omega$, $r$, and $G$ be as in Proposition $2.4$. Then there are
constants $0< \theta \leq 1$, only depending on $\mu$ and $d$ and
$A>0$, depending on $\Omega$, $\alpha$, $\beta$, $\epsilon$,
$\lambda$, $\mu$, $d$ and $G$ such that for any $u\in
C(\overline{\Omega})$, and any $k\geq 1 $ we have

\begin{equation}
 \omega_{H^k_{\alpha}u, G } (t) \leq \big( \frac{1+\alpha}{2} \big )^k \omega_{u,
 \Omega}(t) + A ||u||_{\infty} t^{\theta}
\end{equation}
for $0 \leq t \leq \frac{1}{2}diam(\Omega )$, where $H^k_{\alpha} u$
stands for the $k$-th. iterate of the operator $H_{\alpha}$.

\end{prop}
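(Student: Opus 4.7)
The plan is to adapt the iteration argument of Proposition~2.4 to $H_\alpha$, replacing the one-dimensional recursion on the level index $n$ with a two-dimensional recurrence on $(k,n)$ that reflects the decomposition $H_\alpha = \tfrac{1}{2}I + \tfrac{1}{2}T_\alpha$.

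First I would establish the analog of~(2.11) by writing $H_\alpha u = \tfrac{1}{2}(u + T_\alpha u)$ and treating the identity and $T_\alpha$ parts separately. The identity contribution is controlled by the modulus of $u$ on $\Omega_n$ itself, while (2.11) applied to the $T_\alpha$ contribution introduces a term on $\Omega_{n+1}$. For $x, y \in \Omega_n$ this yields
\[
\omega_{H_\alpha u, \Omega_n}(t) \leq \tfrac{1}{2}\omega_{u, \Omega_n}(t) + \tfrac{\alpha}{2}\omega_{u, \Omega_{n+1}}(t) + c_n\, t^\theta,
\]
with $c_n = \tfrac{1-\alpha}{2}C\|u\|_\infty \lambda^{-\theta}(1-\epsilon)^{-n\beta\theta}$. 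The crucial bookkeeping point is to keep the identity's $\tfrac{1}{2}$ attached to the level $n$ rather than merging it with $\tfrac{\alpha}{2}$ into a single $\tfrac{1+\alpha}{2}$ coefficient sitting on $\Omega_{n+1}$.

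Next I would iterate. Setting $A_k(n) := \omega_{H^k_\alpha u, \Omega_n}(t)$ and using $\|H^k_\alpha u\|_\infty \leq \|u\|_\infty$ (since both $I$ and $T_\alpha$ are sup-norm nonexpansive), applying the one-step inequality to $H^k_\alpha u$ gives
\[
A_{k+1}(n) \leq \tfrac{1}{2}A_k(n) + \tfrac{\alpha}{2}A_k(n+1) + c_n\, t^\theta.
\]
Unfolding this recurrence by induction on $k$ (equivalently, binomially expanding the operator $P = \tfrac{1}{2}I + \tfrac{\alpha}{2}\cdot\mathrm{shift}$) produces
\[
A_k(n) \leq \sum_{j=0}^{k}\binom{k}{j}\tfrac{\alpha^j}{2^k}\, \omega_{u,\Omega_{n+j}}(t) \;+\; t^\theta \sum_{\ell=0}^{k-1}\sum_{j=0}^{\ell}\binom{\ell}{j}\tfrac{\alpha^j}{2^\ell}\, c_{n+j}.
\]
Bounding $\omega_{u,\Omega_{n+j}}(t) \leq \omega_{u,\Omega}(t)$ and using the binomial identity $\sum_j \binom{k}{j}\alpha^j/2^k = \bigl(\tfrac{1+\alpha}{2}\bigr)^k$ yields the main term of~(2.15). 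For the perturbation, writing $c_{n+j} = c_n\, q^j$ with $q = (1-\epsilon)^{-\beta\theta}$ collapses the inner $j$-sum to $\bigl(\tfrac{1+\alpha q}{2}\bigr)^\ell$, so the $\ell$-sum is a geometric series with ratio $\rho = \tfrac{1+\alpha q}{2}$. The condition $\rho<1$ is precisely $\alpha(1-\epsilon)^{-\beta\theta}<1$, which is guaranteed by~(1.11) since $\theta\leq 1$ forces $(1-\epsilon)^{\beta\theta}\geq(1-\epsilon)^\beta>\alpha$. Choosing $n\geq n_0$ with $G\subset\Omega_n$ then delivers~(2.15) with
\[
A = \frac{(1-\alpha)C\lambda^{-\theta}(1-\epsilon)^{-n\beta\theta}}{1-\alpha(1-\epsilon)^{-\beta\theta}}.
\]

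The main obstacle is exactly this bookkeeping. A naive iteration that first uses $\omega_{u,\Omega_n}(t)\leq\omega_{u,\Omega_{n+1}}(t)$ to merge the two moduli and then mimics the one-dimensional iteration of Proposition~2.4 produces a geometric series with ratio $\tfrac{1+\alpha}{2}(1-\epsilon)^{-\beta\theta}$, which need not be less than $1$ under~(1.11) because $\tfrac{1+\alpha}{2}$ can exceed $(1-\epsilon)^{\beta\theta}$. Keeping the identity and $T_\alpha$ contributions on separate levels reduces the effective ratio to $\alpha(1-\epsilon)^{-\beta\theta}$, the exact quantity that makes Proposition~2.4 work, while the desired decay factor $\bigl(\tfrac{1+\alpha}{2}\bigr)^k$ arises from the binomial identity applied to the $(n,n+1)$-shift structure rather than from a direct geometric iteration.
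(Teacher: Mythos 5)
Your proof is correct and follows essentially the same route as the paper: the same one-step inequality $\omega_{H_{\alpha}u,\Omega_n}(t)\leq \tfrac{1}{2}\omega_{u,\Omega_n}(t)+\tfrac{\alpha}{2}\omega_{u,\Omega_{n+1}}(t)+c_n t^{\theta}$, the same binomial unfolding giving the $\bigl(\tfrac{1+\alpha}{2}\bigr)^k$ factor, and the same geometric series with ratio $\tfrac{1}{2}\bigl(1+\alpha(1-\epsilon)^{-\beta\theta}\bigr)$ controlled via $(1.11)$. The only (harmless) difference is that the paper bounds the resulting constant using the slightly larger denominator $1-\alpha(1-\epsilon)^{-\beta}$ in place of your $1-\alpha(1-\epsilon)^{-\beta\theta}$.
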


\begin{proof}
Since
$$
H_{\alpha}u(x) - H_{\alpha}u(y) = \frac{1}{2}(u(x) - u(y)) +
\frac{1}{2}( T_{\alpha}u(x) - T_{\alpha}u(y) )
$$
it follows that the analogue of $(2.11)$ for the operator
$H_{\alpha}$ reads
\begin{align}
\omega_{H_{\alpha}u, \Omega_n}(t) & \leq \frac{1}{2}\omega_{u,
\Omega_n}(t) + \frac{\alpha}{2}\omega_{u, \Omega_{n+1}}(t) \nonumber
\\ &  + \frac{1-\alpha}{2}C||u||_{\infty}\lambda^{-\theta}(1-\epsilon )^{-n\beta \theta} t^{\theta}
\end{align}
Now we iterate $(2.16)$ to obtain

\begin{align}
\omega_{H^k_{\alpha}u, \Omega_n}(t) & \leq 2^{-k}\sum_{j=0}^k {k
\choose j}\alpha^j \omega_{u, \Omega_{n+j}}(t) \nonumber \\  & + B'
||u||_{\infty}\, t^{\theta}\sum_{j=0}^{k-1} 2^{-j}\big [ 1 + \alpha
(1-\epsilon )^{-\beta \theta} \big]^{j}
\end{align}
where $\displaystyle B' = \frac{1-\alpha}{2}C \lambda^{-\theta}
(1-\epsilon )^{-n\beta \theta}$ so, from $(2.17)$ and $(1.11)$ we
get
\begin{equation}
\omega_{H^k_{\alpha}u, \Omega_n}(t) \leq \big (\frac{1 +
\alpha}{2}\big)^{k} \omega_{u, \Omega}(t) + (1-\alpha )
\frac{C||u||_{\infty} (1-\epsilon )^{-n\beta \theta}
\lambda^{-\theta}}{1 - \alpha (1-\epsilon )^{-\beta} }\, t^{\theta}
\end{equation}
and $(2.15)$ follows from $(2.18)$ as in Proposition $2.4$.
\end{proof}

For a fixed $u \in C(\overline{\Omega})$ we deduce as a consequence
the equicontinuity of the sequences $\{ T^k_{\alpha}u \}$ and $\{
H^k_{\alpha}u \}$.

\begin{prop}
Let $\alpha$, $\beta$, $\epsilon$, $\lambda$,$\mu$, $\Omega$ and $r$
be as in Proposition $2.4$. Then for any $u \in
C(\overline{\Omega})$ and each $x\in \Omega$, the sequences
$\displaystyle \{ T^k_{\alpha}u \}_{k}$ and $\displaystyle \{
H^k_{\alpha}u \}_{k}$ are equicontinuous at $x$.
\end{prop}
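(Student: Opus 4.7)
The plan is to deduce the equicontinuity at $x$ directly from the uniform-in-$k$ modulus-of-continuity bounds $(2.12)$ and $(2.15)$. The key observation is that in both estimates the geometric factor ($\alpha^k$ or $\big(\tfrac{1+\alpha}{2}\big)^k$) is bounded by $1$, so the right-hand side is dominated by $\omega_{u,\Omega}(t)+A\|u\|_{\infty}t^{\theta}$, a single modulus of continuity independent of $k$ that tends to $0$ as $t\to 0$.

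First I would produce, for the given $x\in\Omega$, a convex subdomain $G\Subset\Omega$ containing $x$ to which Propositions $2.4$ and $2.5$ apply, that is, with $\mathrm{diam}(G)\geq\tfrac{1}{2}\mathrm{diam}(\Omega)$. The natural choice is $G=\Omega_n$ from $(2.10)$, with $n\geq n_0$ large enough that $\mathrm{dist}(x,\partial\Omega)>(1-\epsilon)^n$; such $n$ exists because $x\in\Omega$. Since $\Omega$ is convex, the function $z\mapsto\mathrm{dist}(z,\partial\Omega)$ is concave on $\Omega$, so its superlevel set $\Omega_n$ is convex, and clearly $\overline{\Omega_n}\subset\Omega$, giving $\Omega_n\Subset\Omega$.

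Next, given $\varepsilon>0$, I apply Proposition $2.4$ with this $G$: for all $k\geq 1$ and all $y\in G$ with $|x-y|\leq\tfrac{1}{2}\mathrm{diam}(\Omega)$,
$$|T^k_{\alpha}u(x)-T^k_{\alpha}u(y)|\leq\omega_{T^k_{\alpha}u,G}(|x-y|)\leq\alpha^k\omega_{u,\Omega}(|x-y|)+A\|u\|_{\infty}|x-y|^{\theta}.$$
Since $\alpha^k\leq 1$ and $\omega_{u,\Omega}$ is the modulus of continuity of a function continuous on the compact $\overline{\Omega}$, the first term is at most $\omega_{u,\Omega}(|x-y|)<\varepsilon/2$ uniformly in $k$ whenever $|x-y|$ is sufficiently small; similarly $A\|u\|_{\infty}|x-y|^{\theta}<\varepsilon/2$ for $|x-y|$ small. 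This is the equicontinuity of $\{T^k_{\alpha}u\}_k$ at $x$. The argument for $\{H^k_{\alpha}u\}_k$ is identical, invoking $(2.15)$ in place of $(2.12)$ and using $\big(\tfrac{1+\alpha}{2}\big)^k\leq 1$.

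There is no genuine obstacle, as all the analytic work is already packaged into Propositions $2.4$ and $2.5$; the only point worth verifying is the availability of the subdomain $G$, and that is exactly where the convexity of $\Omega$ and the choice of $n_0$ in the definition of $\Omega_n$ enter.
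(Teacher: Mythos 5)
Your proof is correct and follows essentially the same route as the paper, which simply chooses a proper convex subdomain $G\Subset\Omega$ containing $x$ and applies Propositions $2.4$ and $2.5$. You merely make explicit the choice $G=\Omega_n$ and the uniform-in-$k$ bound $\alpha^k\leq 1$, which the paper leaves implicit.
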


\begin{proof}
Choose a proper subdomain $G \Subset \Omega$ containing $x$ and
apply Propositions $2.4$ and $2.5$.

\end{proof}

\section{Equicontinuity at the boundary}

In this section we assume that $\mu$ is Lebesgue measure on $\R^d$.
Let $\Omega \subset \R^d$ be a bounded, convex domain. For $u\in
C(\overline{\Omega})$ let

$$
 G_u = \{ (x, u(x)): x\in \overline{\Omega} \}\subset \R^{d+1}
$$
be the graph of $u$ and define $\Gamma_u = co ( G_u )$ to be the
convex hull of $G_u$.

\begin{prop}
Let $\mu = m $ be Lebesgue measure on $\R^d$, $\Omega \subset \R^d$
a bounded, convex domain, $r$ an admissible radius function in
$\Omega$, $0\leq \alpha < 1$ and $S$, $M$, $T_\alpha $ and
$H_{\alpha}$ the operators given by $(1.2)$, $(1.3)$, $(1.4)$ and
$(1.11)$ respectively. If $u\in C(\overline{\Omega})$ then
\begin{equation}
G_{Su} \cup G_{Mu} \cup G_{T_{\alpha}u} \cup G_{H_{\alpha}u} \subset
\Gamma_{u}
\end{equation}
In particular, for each $k\in \N$,
\begin{align}
G_{T^k_{\alpha}u} & \subset \Gamma_{T^k_{\alpha}u} \subset
\Gamma_{T^{k-1}_{\alpha} u} \subset \cdots \subset \Gamma_u \\
G_{H^k_{\alpha}u} & \subset \Gamma_{H^k_{\alpha}u} \subset
\Gamma_{H^{k-1}_{\alpha} u} \subset \cdots \subset \Gamma_u
\end{align}
\end{prop}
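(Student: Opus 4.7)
The plan is to fix $x\in \Omega$ and verify, case by case, that each of the four points $(x, Su(x))$, $(x, Mu(x))$, $(x, T_{\alpha}u(x))$, $(x, H_{\alpha}u(x))$ lies in $\Gamma_u$. Since $\overline{\Omega}$ is compact and $u\in C(\overline{\Omega})$, the graph $G_u\subset \R^{d+1}$ is compact, and Carath\'eodory's theorem guarantees that $\Gamma_u = co(G_u)$ is also compact and therefore closed. Admissibility of $r$ gives $\overline{B_x}\subset \overline{\Omega}$, so $(y, u(y))\in G_u$ for every $y\in \overline{B_x}$. The real content lies in the two ``building block'' operators $S$ and $M$; the statements for $T_{\alpha}$ and $H_{\alpha}$ will then follow automatically, and the chains $(3.2)$ and $(3.3)$ will drop out by iterating $(3.1)$.

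For the operator $M$ I would exploit the hypothesis $\mu = m$: Lebesgue measure on $B_x$ is symmetric about $x$, so
\[
\fint_{B_x} y\, dm(y) = x \quad\text{and therefore}\quad (x, Mu(x)) = \fint_{B_x}(y, u(y))\, dm(y).
\]
Approximating this vector-valued integral by Riemann sums over finite measurable partitions $\{E_i\}$ of $B_x$ with sample points $y_i\in E_i$, each sum $\sum_i \frac{m(E_i)}{m(B_x)}(y_i, u(y_i))$ is a finite convex combination of points of $G_u$, hence lies in $\Gamma_u$. Continuity of $u$ and closedness of $\Gamma_u$ let us pass to the limit and conclude $(x, Mu(x))\in \Gamma_u$.

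The operator $S$ is the main obstacle, because the naive midpoint of the segment joining $(y_+, \sup_{B_x}u)$ and $(y_-, \inf_{B_x}u)$ in $G_u$ projects to $\tfrac{y_++y_-}{2}$, which differs from $x$ in general. I would resolve this with a reflection trick. Pick $y_+, y_-\in \overline{B_x}$ realizing $M_x := \sup_{B_x}u$ and $m_x := \inf_{B_x}u$ (these exist by continuity of $u$ on the compact set $\overline{B_x}$). Because $B_x$ is centered at $x$, the reflections $y_+':= 2x-y_+$ and $y_-':= 2x-y_-$ still lie in $\overline{B_x}$, and pairing each point with its reflection gives
\begin{align*}
(x, h_+) &:= \tfrac{1}{2}\bigl((y_+, M_x) + (y_+', u(y_+'))\bigr)\in \Gamma_u, \\
(x, h_-) &:= \tfrac{1}{2}\bigl((y_-, m_x) + (y_-', u(y_-'))\bigr)\in \Gamma_u.
\end{align*}
Since $u(y_+')\ge m_x$ and $u(y_-')\le M_x$, one has $h_-\le Su(x)\le h_+$, so the convex set $\Gamma_u$ contains the entire vertical segment $\{x\}\times [h_-, h_+]$; in particular $(x, Su(x))\in \Gamma_u$.

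With these two cases in hand, $(x, T_{\alpha}u(x)) = \alpha(x, Su(x)) + (1-\alpha)(x, Mu(x))$ and $(x, H_{\alpha}u(x)) = \tfrac{1}{2}(x, u(x)) + \tfrac{1}{2}(x, T_{\alpha}u(x))$ are convex combinations of elements of $\Gamma_u$, completing $(3.1)$. For $(3.2)$ and $(3.3)$ I would apply $(3.1)$ with $u$ replaced by $T^{k-1}_{\alpha}u$ (respectively $H^{k-1}_{\alpha}u$) to obtain $G_{T^k_{\alpha}u}\subset \Gamma_{T^{k-1}_{\alpha}u}$; since the right-hand side is already convex, its convex hull $\Gamma_{T^k_{\alpha}u}$ is contained in it, and iterating downward yields the full nested chains.
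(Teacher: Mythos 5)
Your proof is correct, and while it shares the paper's overall strategy (reduce to the two building blocks $S$ and $M$, then exploit symmetry of the ball about its center $x$), both key cases are handled by genuinely different arguments. For $M$, the paper writes $\int_{B_x}u\,dm=\tfrac12\int_{B(0,r_x)}[u(x+y)+u(x-y)]\,dm(y)$ and invokes the mean value theorem for integrals to produce a single $h$ with $Mu(x)=\tfrac12[u(x+h)+u(x-h)]$, so that $(x,Mu(x))$ is \emph{exactly} the midpoint of two points of $G_u$ and no topological property of $\Gamma_u$ is needed; you instead use the barycenter identity $\fint_{B_x}y\,dm(y)=x$ and approximate the vector-valued average by finite convex combinations, which obliges you to check that $\Gamma_u$ is closed (correctly done via Carath\'eodory and compactness of $G_u$). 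Your route costs one extra standard fact but is slightly more robust: it uses only that the barycenter of $m$ restricted to $B_x$ is $x$, not the full reflection symmetry. For $S$, the paper normalizes $u$ and applies the intermediate value theorem to $v(y)=u(x+y)+u(x-y)$ to find $h$ with $u(x+h)+u(x-h)=\sup_{B_x}u+\inf_{B_x}u$, again obtaining an exact two-point representation of $(x,Su(x))$; your sandwich argument, which produces $h_-\le Su(x)\le h_+$ with $(x,h_{\pm})\in\Gamma_u$ and then uses convexity of the vertical fiber $\{x\}\times[h_-,h_+]$, reaches the same conclusion without the IVT and is arguably cleaner. The deduction of the nested chains $(3.2)$ and $(3.3)$ from $(3.1)$ is identical to the paper's.
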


\begin{proof}

Since
\begin{align*}
(x, T_{\alpha}u(x)) = & \, \alpha (x, Su(x)) + (1-\alpha )(x, Mu(x)) \\
(x, H_{\alpha}u(x)) = & \, \frac{1}{2}(x, u(x)) + \frac{1}{2}(x,
T_{\alpha}u(x))
\end{align*}
and $\Gamma_u$ is convex, to prove $(3.1)$ it is enough to prove
that $G_{Su} \subset \Gamma_u $ and $G_{Mu} \subset \Gamma_u$. Fix
$x\in \Omega$.


Let us first show that $G_{Su} \subset \Gamma_u$. It is enough to
show that there is $h\in \R^n$, with $|h|\leq r_x$ so that
\begin{equation}
\sup_{B_x} u + \inf_{B_x} u = u(x+h) + u(x-h)
\end{equation}
Indeed, if $(3.4)$ is true then
$$
(x, Su(x)) = \frac{1}{2}(x+h, u(x+h)) + \frac{1}{2}(x-h, u(x-h)) \in
\Gamma_u
$$
We may assume that $\displaystyle \sup_{B_x} u = 1$ and
$\displaystyle \inf_{B_x} u = -1$ (otherwise replace $u$ by
$\displaystyle 1 + \frac{2}{M-m}(u-M)$, where $\displaystyle
\sup_{B_x} u = M $ and $\displaystyle \inf_{B_x} u = m$). Then we
must show that there is $h\in \overline{B}(0, r_x )$ so that $u(x
+h) + u(x-h) = 0$. Define the continuous function $v$ in
$\overline{B}(0, r_x )$ as
$$
v(y) = u(x+y) + u(x-y)
$$
and choose $h_+$, $h_- \in \overline{B}(0, r_x )$ such that $u(x+h_+
) = 1$, $u(x + h_- ) = -1$. Then
\begin{align*}
v(h_+ ) = u(x + h_+ ) + u(x- h_+ ) = & 1 + u(x - h_+ ) \geq 0 \\
v(h_- ) = u(x + h_- ) + u(x - h_- ) = & u(x + h_- ) -1 \leq 0
\end{align*}
so by continuity there must be $h \in \overline{B}(0, r_x )$ such
that $v(h) = 0$. This proves $(3.4)$ and therefore the inclusion
$G_{Su} \subset \Gamma_u$.

We prove now that $G_{Mu} \subset \Gamma_u$. Observe that
\begin{equation}
\int_{B_x} u \, dm = \frac{1}{2}\int_{B(0, r_x )} [u(x+y) +
u(x-y)]dm(y)
\end{equation}
which implies, by continuity, that there is  $h\in B(0, r_x )$ such
that
$$
\fint_{B_x} u \, dm = \frac{1}{2}[u(x+h) + u(x-h)]
$$
Then
$$
(x, Mu(x)) = \frac{1}{2}\big ( (x+h, u(x+h)) + (x-h, u(x-h)) \big )
$$
and this shows that  $G_{Mu} \subset \Gamma_u$.

Now, from $(3.1)$ we have
$$
 \Gamma_{T^k_{\alpha}u} =
co(G_{T^k_{\alpha}u} ) \subset co (\Gamma_{T^{k-1}_{\alpha}u} ) =
\Gamma_{T^{k-1}_{\alpha}u}
$$
which implies $(3.2)$. The argument for $(3.3)$ is analogous.

\end{proof}

\begin{remark}
The fact that $\mu = m$ is Lebesgue measure has been used in
identity $(3.5)$.
\end{remark}

\begin{lem}
Let $\Omega \subset \R^d$ be a bounded, strictly convex domain, $u
\in C(\overline{\Omega})$ and $\Gamma_u = co (G_u )$. Then, for each
$\xi \in \partial \Omega$
\begin{equation}
\Gamma_{u} \cap ( \{ \xi \} \times \R ) = \{ (\xi , u (\xi )) \}
\end{equation}
\end{lem}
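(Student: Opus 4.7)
The plan is to exploit strict convexity of $\Omega$ via a supporting hyperplane argument at the boundary point $\xi$, and show that any representation of a point over $\xi$ as a convex combination of graph points must be concentrated at $\xi$ itself.

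First I would take an arbitrary point $(\xi, t) \in \Gamma_u \cap (\{\xi\}\times \R)$ and, using Carath\'eodory's theorem (since $G_u \subset \R^{d+1}$), write it as a finite convex combination
$$
(\xi , t) = \sum_{i=1}^{N} \lambda_i \, ( x_i , u(x_i))
$$
with $x_i \in \overline{\Omega}$, $\lambda_i \geq 0$ and $\sum_i \lambda_i = 1$. Then the first $d$ coordinates give $\xi = \sum_i \lambda_i x_i$.

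Next, since $\Omega$ is convex and $\xi \in \partial \Omega$, there is a supporting hyperplane of $\overline{\Omega}$ at $\xi$: a nonzero vector $v \in \R^d$ with $\langle v , x \rangle \leq \langle v , \xi \rangle$ for every $x \in \overline{\Omega}$. Applying this to the convex combination yields
$$
\langle v, \xi \rangle = \sum_{i} \lambda_i \langle v , x_i \rangle \leq \sum_i \lambda_i \langle v, \xi \rangle = \langle v , \xi \rangle ,
$$
so equality holds term by term, forcing $\langle v, x_i \rangle = \langle v, \xi \rangle$ for every $i$ with $\lambda_i > 0$. In particular, each such $x_i$ lies in the affine hyperplane $H = \{x : \langle v, x\rangle = \langle v, \xi\rangle\}$ and in $\overline{\Omega}$, hence in $H \cap \partial \Omega$.

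The key step, where strict convexity enters, is to observe that $H \cap \overline{\Omega}$ is a convex subset of $\partial \Omega$. If it contained two distinct points $a \neq b$, the open segment $(a,b)$ would lie in $H \cap \overline{\Omega} \subset \partial \Omega$, contradicting the definition of strict convexity. Therefore $H \cap \overline{\Omega} = \{ \xi \}$, so every $x_i$ with $\lambda_i > 0$ equals $\xi$. Substituting into the last coordinate gives $t = \sum_i \lambda_i u(x_i) = u(\xi)$, which proves $(3.6)$. The main obstacle is psychological rather than technical: one must resist the temptation to work directly with the geometry of $\Gamma_u$ in $\R^{d+1}$ and instead project the problem down to the base, where strict convexity of $\Omega$ does all the work.
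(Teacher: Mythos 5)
Your proof is correct and follows the same route as the paper: write $(\xi,t)$ as a finite convex combination of graph points and use strict convexity to force all the base points with positive weight to coincide with $\xi$. The paper simply asserts that strict convexity makes the combination trivial, whereas you supply the justification via a supporting hyperplane at $\xi$ (noting that $H\cap\overline{\Omega}$ is a convex subset of $\partial\Omega$ and hence a singleton); this is exactly the detail the paper leaves implicit, so the two arguments are essentially identical.
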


\begin{proof}
If $(\xi , t) \in \Gamma_u $ then there are $\lambda_1 , \cdots ,
\lambda_m \geq 0$, with $\displaystyle \sum_1^m \lambda _i = 1$ and
there exist $x_1 ,\cdots , x_m \in \overline{\Omega}$ such that
\begin{equation}
(\xi , t) = \sum_{i=1}^m \lambda_i ( x_i , u(x_i ))
\end{equation}
From the strict convexity we deduce that the convex combination in
$(3.6)$ must be trivial in the sense that, say,  $\lambda_1 = 1$,
$\lambda_2 = \cdots = \lambda_m =0$. Then $x_1 = \xi$,  $t= u(\xi )$
and $(3.5)$ follows.
\end{proof}

\begin{prop}
Let $ \mu = m $ be Lebesgue measure on $\R^d$, $\Omega \subset \R^d$
a bounded, strictly convex domain, $r$ an admissible radius function
in $\Omega$, $0\leq \alpha < 1$ and $S$, $M$, $T_\alpha$ and
$H_{\alpha}$ the operators given by $(1.2)$, $(1.3)$, $(1.4)$ and
$(1.11)$ respectively. Then for any $u\in C(\overline{\Omega})$ and
each $\xi \in \partial \Omega$, the sequences $\displaystyle
\{T^k_{\alpha}u \}_k$ and $\displaystyle \{ H^k_{\alpha}u \}_k$ are
equicontinuous at $\xi $.
\end{prop}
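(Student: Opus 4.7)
The plan is to combine the graph inclusions from Proposition $3.1$ with the vertical collapse of $\Gamma_u$ at boundary points from Lemma $3.1$. The strategy is essentially: the iterates cannot escape $\Gamma_u$, and $\Gamma_u$ has only one point above $\xi$, so near $\xi$ every iterate is forced to be close to $u(\xi)$, with the closeness depending only on the geometry of $\Gamma_u$ and not on $k$.

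First, I would observe that $\Gamma_u = co(G_u)$ is a \emph{compact} subset of $\R^{d+1}$: since $u \in C(\overline{\Omega})$ and $\overline{\Omega}$ is compact, $G_u$ is compact, and by Carath\'eodory's theorem the convex hull of a compact set in finite-dimensional Euclidean space is compact.

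Next I would establish the key uniform "squeezing" at $\xi$: for every $\epsilon > 0$ there exists $\delta > 0$ such that
\[
(y,s) \in \Gamma_u, \ |y-\xi| \leq \delta \quad \Longrightarrow \quad |s - u(\xi)| \leq \epsilon.
\]
The argument is by contradiction: if this failed one would obtain a sequence $(y_n, s_n) \in \Gamma_u$ with $y_n \to \xi$ and $|s_n - u(\xi)| \geq \epsilon_0$ for some fixed $\epsilon_0 > 0$; extracting a convergent subsequence using compactness of $\Gamma_u$ yields a limit $(\xi, s^*) \in \Gamma_u$ with $|s^* - u(\xi)| \geq \epsilon_0$, directly contradicting Lemma $3.1$.

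Now I would invoke the chain of inclusions $(3.2)$ and $(3.3)$ of Proposition $3.1$: for every $k \geq 0$ the graphs $G_{T^k_{\alpha}u}$ and $G_{H^k_{\alpha}u}$ are contained in $\Gamma_u$. Therefore, for any $x$ with $|x - \xi| \leq \delta$ (and reading $T^0_\alpha u = H^0_\alpha u = u$ with $u(\xi)$ as the value at $\xi$), the points $(x, T^k_\alpha u(x))$ and $(x, H^k_\alpha u(x))$ lie in $\Gamma_u \cap (B(\xi,\delta) \times \R)$, so by the squeezing property
\[
|T^k_\alpha u(x) - u(\xi)| \leq \epsilon \quad \text{and} \quad |H^k_\alpha u(x) - u(\xi)| \leq \epsilon
\]
uniformly in $k$. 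Since the same bound holds at $x = \xi$ itself (the value being $u(\xi)$), this is exactly equicontinuity of the two sequences at $\xi$.

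There is no real obstacle beyond what is already proved: the geometric crux — strict convexity of $\Omega$ forcing $\Gamma_u$ to pinch to a single point above each boundary point — is already encapsulated in Lemma $3.1$, and the invariance of $\Gamma_u$ under the action of $T_\alpha$ and $H_\alpha$ is Proposition $3.1$. The only technical ingredient added here is the routine compactness argument promoting Lemma $3.1$ to a quantitative $\epsilon$--$\delta$ statement.
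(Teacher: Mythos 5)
Your proposal is correct and follows essentially the same route as the paper: both arguments combine the invariance $G_{T^k_\alpha u}, G_{H^k_\alpha u} \subset \Gamma_u$ from Proposition $3.1$ with the collapse $\Gamma_u \cap (\{\xi\}\times\R) = \{(\xi,u(\xi))\}$ of Lemma $3.1$, via a compactness/closedness argument on $\Gamma_u$. The only cosmetic difference is that you isolate a quantitative $\epsilon$--$\delta$ squeezing statement first, whereas the paper runs the contradiction directly against the definition of equicontinuity.
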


\begin{proof}
Fix $u\in C(\overline{\Omega} )$. Let $\xi \in \partial \Omega$ and
suppose that $\{ T^k_{\alpha}u\}$ is not equicontinuous at $\xi$.
Then there are $\epsilon >0$ and sequences $\{ k_j \} \subset \N $,
$\{ x_j \} \subset \overline{\Omega}$  with  $k_j \uparrow \infty$,
$x_j \to \xi$ such that
$$
|T^{k_j}_{\alpha} u(x_j ) - u(\xi ) | \geq \epsilon
$$
We can assume (otherwise we could take a further subsequence) that
$\displaystyle T^k_{\alpha}u(x_j ) \to t\in \R $ and that $|t -
u(\xi ) | \geq \frac{\epsilon}{2}$. By Proposition $3.1$,
$\displaystyle (x_j , T^k_{\alpha}u(x_j ) ) \in \Gamma_u$ which is a
closed set, so $(\xi , t) \in \Gamma_u$. The contradiction then
follows from Lemma $3.1$.  Therefore $\displaystyle \{ T^k_{\alpha}u
\}_k$ is equicontinuous at each point of $\partial \Omega$. The same
argument  provides equicontinuity of $\displaystyle \{ H^k_{\alpha}
u \}_k $ at each point of $\partial \Omega$.
\end{proof}

\section{Proof of Theorem 1}

The uniqueness part follows from the next comparison principle,
which holds under much more general assumptions.

\begin{prop}
Let $\mu$ be a positive Borel measure in $\R^d$ with the property
that $\mu (B) >0$ for every ball $B \subset \R^d$. Let $\Omega
\subset \R^d$ be a bounded domain, $r: \Omega \to (0, +\infty )$ an
admissible one-radius function in $\Omega$, $0\leq \alpha < 1$ and
let $S$, $M$, $T_{\alpha}$ be the operators given by $(1.2)$,
$(1.3)$ and $(1.4)$ respectively. Suppose that $u$ and $v\in
C(\overline{\Omega})$ satisfy $T_\alpha u = u$, $T_\alpha v = v$ and
that $u\leq v$ on $\partial \Omega $. Then $u\leq v$ in $\Omega$.
\end{prop}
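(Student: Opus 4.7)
The approach is by contradiction via a discrete strong maximum principle applied to $w = u - v$. Set $m = \max_{\overline{\Omega}} w$; the goal is to show $m \leq 0$. Suppose instead $m > 0$. Since $w \leq 0$ on $\partial \Omega$, continuity forces $m = w(x_0)$ for some $x_0 \in \Omega$.

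The first step is to evaluate the equations $T_\alpha u(x_0) = u(x_0)$ and $T_\alpha v(x_0) = v(x_0)$ and subtract to obtain
\begin{equation*}
m = w(x_0) = \alpha\bigl[Su(x_0) - Sv(x_0)\bigr] + (1-\alpha)\bigl[Mu(x_0) - Mv(x_0)\bigr].
\end{equation*}
Using the elementary inequalities $\sup_{B_{x_0}} u - \sup_{B_{x_0}} v \leq \sup_{B_{x_0}} w$ and $\inf_{B_{x_0}} u - \inf_{B_{x_0}} v \leq \sup_{B_{x_0}} w$, I would show $Su(x_0) - Sv(x_0) \leq \sup_{B_{x_0}} w \leq m$, and also $Mu(x_0) - Mv(x_0) = \fint_{B_{x_0}} w\, d\mu \leq m$. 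Substituting and using $1-\alpha > 0$ then yields
\begin{equation*}
\fint_{B_{x_0}} w\, d\mu = m.
\end{equation*}

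The second step is the propagation. Since $w \leq m$ is continuous on $B_{x_0}$, the mean equals $m$, and $\mu$ assigns positive measure to every ball (hence to every nonempty open subset of $B_{x_0}$), the only way the average can equal the pointwise maximum is $w \equiv m$ on $B_{x_0}$. Setting $E = \{x \in \Omega : w(x) = m\}$, continuity makes $E$ closed in $\Omega$, and the argument just given shows that for any $y \in E$ one has $B_y \subset E$, so $E$ is open. By connectedness of $\Omega$, $E = \Omega$, and continuity up to the boundary gives $w \equiv m$ on $\overline{\Omega}$, contradicting $w \leq 0$ on $\partial \Omega$.

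There is no real obstacle here; the only technical points are verifying the supremum/infimum inequalities used in bounding $Su - Sv$ and the pointwise propagation, both of which are routine given $B_x \subset \Omega$ (from admissibility of $r$) and $\mu(B) > 0$ for every ball. It is worth noting that this comparison principle uses neither strict convexity of $\Omega$, nor the specific form of the radius function, nor any doubling property of $\mu$, consistent with the proposition being stated under much weaker hypotheses than Theorem~$1$.
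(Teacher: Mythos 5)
Your proposal is correct and follows essentially the same route as the paper: at a point where $u-v$ attains its positive maximum $m$, the convexity of $T_\alpha$ together with $1-\alpha>0$ forces $\fint_{B_{x_0}}(u-v)\,d\mu=m$, and positivity of $\mu$ on balls plus continuity propagates $u-v\equiv m$ to $B_{x_0}$, after which the open-closed-connectedness argument gives the contradiction with the boundary inequality. The paper phrases the averaging step as $\int_{B_a}(v+m-u)\,d\mu=0$, but this is the same computation.
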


\begin{proof}
The argument is standard in comparison results. Let $u$ and $v$ as
in the statement of the proposition. Let $\displaystyle m =
\max_{\overline{\Omega}}(u - v) $. We will show that $m \leq 0$.
Suppose, on the contrary, that $m>0$ and define
$$
A = \{ x\in \Omega \, : \, (u-v)(x) = m \}
$$
Then $A$ is a nonempty, closed subset of $\Omega$. Take $a \in A$.
We will see that $B_a \subset A$ so $A$ is also open. Indeed,
$$
u(a) = \alpha Su(a) + (1-\alpha ) Mu(a) = \alpha ( m + Sv(a)) +
(1-\alpha ) ( m + Mv(a))
$$
Since $u \leq v + m$ in $B_a$ we must have in particular that $Mu(a)
= m + Mv(a)$. Therefore
\begin{equation}
\int_{B_a} (v+m-u)\, d\mu = 0
\end{equation}
The integrand in $(4.1)$ is  continuous  and nonnegative so by
continuity and the hypothesis on $\mu$ it follows that $v + m - u
\equiv  0$ in $B_a$. This proves that $A$ is open. Therefore, by
connectedness $A = \Omega$ and $v-u \equiv m > 0$ in $\Omega$, which
contradicts the assumption $u\leq v$ on $\partial \Omega$. Then
$m\leq 0$ and the proposition follows.
\end{proof}

To prove the existence part we will need a result from metric fixed
point theory. Let $(X , || . ||)$ be a Banach space and $K \subset
X$. A self-mapping $T: K \to K $ of $K$ is \emph{nonexpansive} if
$$
||Tx - Ty || \leq ||x -y ||
$$
for each $x$, $y\in K$. The following result will be a key
ingredient in the proof of existence. It is a particular case of a
more general result from Ishikawa (\cite{I} , see also \cite{GK},
Theorem $9.4$).

\begin{theorem} \rm{(Ishikawa)}
Let $X$ be a Banach space, $K\subset X$ a bounded, closed and convex
subset of $X$ and let $T: K \to K$ be a nonexpansive self-mapping of
$K$. Define $\displaystyle H = \frac{1}{2}(I + T)$. Then
\begin{equation}
\lim_{k\to \infty}||H^{k+1}x - H^k x || = 0
\end{equation}
for each $x\in K$.
\end{theorem}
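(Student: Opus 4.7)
The plan is to study the iterates $x_n := H^n x$, which satisfy the recursion $x_{n+1} = \frac{1}{2}(x_n + T x_n)$, and to track the displacement $d_n := || T x_n - x_n ||$. Since
\[
|| H^{n+1} x - H^n x || = || x_{n+1} - x_n || = \frac{d_n}{2},
\]
the theorem reduces to showing that $d_n \to 0$. I would proceed in two stages: first establish that $\{d_n\}$ is monotonically nonincreasing, and then show by a contradiction argument that its limit must be zero.

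For the monotonicity, the averaging structure of $H$ together with the nonexpansiveness of $T$ do the work. From $x_{n+1} - T x_n = \frac{1}{2}(x_n - T x_n)$ one gets $|| x_{n+1} - T x_n || = d_n / 2$, while nonexpansiveness gives $|| T x_{n+1} - T x_n || \leq || x_{n+1} - x_n || = d_n / 2$. A single triangle inequality then yields
\[
d_{n+1} = || T x_{n+1} - x_{n+1} || \leq || x_{n+1} - T x_n || + || T x_n - T x_{n+1} || \leq d_n,
\]
so $d_n \downarrow d$ for some $d \geq 0$.

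The heart of the proof is ruling out $d > 0$, and here the boundedness of $K$ and the specific midpoint choice $H = (I+T)/2$ both become essential. I would prove, by induction on $m \geq 0$, a Goebel--Kirk-type inequality of the approximate form
\[
\Bigl(1 + \frac{m}{2}\Bigr)\, d_n \leq || x_n - T x_{n+m} || + 2^{-m}(d_n - d_{n+m}),
\]
derived by expanding $x_{n+m+1} = \frac{1}{2}(x_{n+m} + T x_{n+m})$, inserting the nonexpansive bound $|| T x_{n+m+1} - T x_n || \leq || x_{n+m+1} - x_n ||$, and rearranging. The factor $\frac{1}{2}$ coming from the midpoint average is what simultaneously delivers the arithmetic progression on the left and the geometric decay $2^{-m}$ on the right. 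Once this is established, the right-hand side stays bounded by $\operatorname{diam}(K) + 2^{-m} d_0$ uniformly in $n$, while the left-hand side grows at least like $(1 + m/2)d$; sending $m \to \infty$ forces $d = 0$.

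The main obstacle is finding and correctly bookkeeping that inductive inequality: a naive triangle-inequality iteration extracts no gain and merely reshuffles the terms, so one has to track two coupled quantities (the crossed distance $|| x_n - T x_{n+m} ||$ and the gap $d_n - d_{n+m}$) in order to exploit the exact symmetry of the midpoint averaging, where neither $I$ nor $T$ individually is contractive but their average is regularizing in the asymptotic sense \eqref{} of the statement.
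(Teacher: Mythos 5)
First, a point of comparison: the paper does not prove this statement at all --- it is quoted as a known result, with references to \cite{I} and to Theorem $9.4$ of \cite{GK} --- so there is no internal proof to measure yours against. Your reduction to $d_n:=\Vert Tx_n-x_n\Vert$ (so that $\Vert H^{k+1}x-H^kx\Vert=d_k/2$) and your proof that $d_n$ is nonincreasing are both correct, and the overall strategy --- a Goebel--Kirk/Ishikawa inequality playing the growth $(1+\tfrac{m}{2})d$ off against the boundedness of $K$ --- is exactly the one used in the cited sources.

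However, your key inequality carries the exponential factor with the wrong sign in the exponent, and your concluding limit argument depends essentially on that error. The correct inequality for the iteration $x_{n+1}=(1-\lambda)x_n+\lambda Tx_n$ is
\[
\Bigl(1+m\lambda\Bigr)d_n\;\leq\;\Vert Tx_{n+m}-x_n\Vert\;+\;(1-\lambda)^{-m}\bigl(d_n-d_{n+m}\bigr),
\]
which for $\lambda=\tfrac12$ has the factor $2^{m}$, not $2^{-m}$. Your version with $2^{-m}$ is false: take $X=\R$, $K=[-1,1]$, $Tx=-x$, $x_0=1$; then $x_1=0$, $d_0=2$, $d_1=0$, and for $m=1$, $n=0$ your inequality would assert $3\leq\vert 1-0\vert+\tfrac12\cdot 2=2$. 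Because the correct factor $2^{m}$ blows up rather than decays, you cannot bound the right-hand side by $\operatorname{diam}(K)+2^{-m}d_0$ uniformly in $n$ and simply send $m\to\infty$. The standard argument reverses the order of the limits: assuming $d:=\lim_n d_n>0$, one first fixes $m$ with $(1+\tfrac{m}{2})d>\operatorname{diam}(K)+1$, and only then uses the convergence $d_n\downarrow d$ to choose $n$ so large that $2^{m}(d_n-d_{n+m})<1$, obtaining $\operatorname{diam}(K)\geq\Vert Tx_{n+m}-x_n\Vert\geq(1+\tfrac{m}{2})d-1>\operatorname{diam}(K)$, a contradiction. With the sign corrected and this two-step choice of $m$ and then $n$, your outline becomes the standard proof; you would also still need to actually carry out the induction establishing the displayed inequality, which at present is only sketched ``in approximate form.''
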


\

Condition  $(4.2)$ has been named \textit{asymptotic regularity} by
some authors. Let us see how to prove the existence in Theorem $1$.
Take $X = ( C(\overline{\Omega}) , || . ||_{\infty} ) $ and fix
$f\in C(\partial \Omega )$. Define
$$
K = \{ u\in X : \, u|_{\partial \Omega} = f \, , \, ||u ||_{\infty}
= ||f ||_{\infty} \}
$$
Then $K$ is a nonempty bounded, closed, convex subset of $X$.
Observe that $S$, $M$, $T_{\alpha}$ and $H_{\alpha}$ are all
nonexpansive self-mappings of $K$. To prove the existence of the
solution of the Dirichlet problem $(1.5)$ it is enough to show that
$T_{\alpha}$ has a fixed point in $K$.

Choose $u_0 \in K$. Then the sequence $\{ H^k_{\alpha} u_0 \}$ is
pointwise bounded and also equicontinuous at each point of the
compact set $\overline{\Omega}$ by Propositions $2.6$ and $3.2$.
Therefore, by Arzéla-Ascoli theorem there are a subsequence $\{ k_j
\} $, with $k_j \uparrow \infty$ and $\widetilde{u}\in K$ such that
\begin{equation}
\lim_{j\to \infty} H^{k_j}_{\alpha} u_0 = \widetilde{u}
\end{equation}
uniformly in $\overline{\Omega}$. Then
\begin{equation}
\lim_{j\to \infty}H^{k_j +1}_{\alpha}u_0 =  H_{\alpha}\widetilde{u}
\end{equation}
and, from  Theorem $4.1$ applied to $H_{\alpha}$, we get
\begin{equation}
\lim_{j\to \infty} || H^{k_j +1}_{\alpha}u_0 - H^{k_j}_{\alpha}u_0
||_{\infty} = 0
\end{equation}
so, from $(4.3)$, $(4.4)$ and $(4.5)$ we deduce that
$H_{\alpha}\widetilde{u} = \widetilde{u}$. Since $T_{\alpha}$ and
$H_{\alpha}$ have the same fixed points, we get $T_{\alpha}
\widetilde{u} = \widetilde{u}$ which proves the existence part in
Theorem $1$. To see that, actually, $\displaystyle H^k_{\alpha}u_0
\to \widetilde{u}$ suppose, on the contrary, that there are
$\epsilon
>0$ and a subsequence $\{ m_j \}$ such that
\begin{equation}
||H^{m_j}_{\alpha}u_0 - \widetilde{u}||_{\infty} \geq \epsilon
\end{equation}
By equicontinuity, a subsequence of $\displaystyle
\{H^{m_j}_{\alpha}u_0 \}$ would converge to some $v\in K$ and, as
before, $T_{\alpha}v = v$ so, by uniqueness, $v = \widetilde{u}$,
which contradicts $(4.6)$. This proves that $\displaystyle
H^k_{\alpha}u_0 \to \widetilde{u}$ and finishes the proof of Theorem
$1$.

\end{document}